\def\qed{\hfill {\hbox{${\vcenter{\vbox{               
   \hrule height 0.4pt\hbox{\vrule width 0.4pt height 6pt
   \kern5pt\vrule width 0.4pt}\hrule height 0.4pt}}}$}}}
\def\tr{\triangleright}
\def\bar{\overline}
\newtheorem{theorem}{Theorem}
\newtheorem{definition}{Definition}
\newtheorem{corollary}[theorem]{Corollary}
\newtheorem{example}{Example}
\newtheorem{remark}{Remark}
\newenvironment{proof}[1][Proof]{\smallskip\noindent{\bf #1.}\quad}%
{\qed\par\medskip}
\date{}
\author{\begin{tabular}{c}Jackson Blankstein \\
\footnotesize{jacksblanks@gmail.com}\end{tabular}
\and \begin{tabular}{c}Susan Kim \\ \footnotesize{snooziesoosie@gmail.com}\end{tabular}
\and \begin{tabular}{c}Catherine Lepel\\ \footnotesize{clepel90@gmail.com}\end{tabular}
\and \begin{tabular}{c}Sam Nelson \\ \footnotesize{knots@esotericka.org}\end{tabular}
\and \begin{tabular}{c}Nicole Sanderson \\ \footnotesize{nfsanderson@ucdavis.edu}\end{tabular}\footnote{This paper is the end result of work done at the Claremont Colleges Mathematics
REU Site with funding from the NSF (Award \# DMS 0755540).}}
\title{\Large \textbf{Virtual shadow modules and their link invariants} }
\begin{document}
\maketitle

\begin{abstract}
We introduce an algebra $\mathbb{Z}[X,S]$ associated to a pair $X,S$ of a 
virtual birack $X$ and $X$-shadow $S$. We use modules over $\mathbb{Z}[X,S]$ 
to define enhancements of the virtual birack shadow counting invariant, 
extending the birack shadow module invariants to virtual case. We repeat 
this construction for the twisted virtual case. As applications, we show 
that the new invariants can detect orientation reversal and are not 
determined by the knot group, the Arrow polynomial and the Miyazawa 
polynomial, and that the twisted version is not determined
by the twisted Jones polynomial.
\end{abstract}

\medskip
\quad
\parbox{5.5in}{
\textsc{Keywords:} Biracks, birack shadows, virtual links, twisted virtual
links, link invariants, enhancements of counting invariants
\smallskip

\textsc{2010 MSC:} 57M27, 57M25
}

\section{\large\textbf{Introduction}}

Virtual knots and links were introduced in \cite{K} and have been the subject
of much study since. Twisted virtual knots and links were introduced in 
\cite{B} and have been studied in papers such as \cite{CN6,K2,SPC}. Biracks 
(including biquandles) were first introduced in 
\cite{FRS} as an algebraic structure defining invariants of framed knots and 
links in $S^3$. Biquandle-based representational invariants of virtual
knots and links were studied in papers such as \cite{CYB,FJK,NV}. Biquandles 
were generalized to \textit{virtual biquandles}, structures involving 
operations at virtual crossings as well as classical crossings, in \cite{KM}. 
In \cite{N} a representational invariant of unframed classical and virtual 
knots and links, the \textit{integral birack counting invariant} 
$\Phi_{X}^{\mathbb{Z}}(L)$, was defined. Enhancements of 
$\Phi_{X}^{\mathbb{Z}}(L)$, i.e., invariants which specialize to 
$\Phi_{X}^{\mathbb{Z}}(L)$ but are generally stronger invariants, 
have been studied in various papers such as \cite{BN,CN}.

In \cite{AG} an associative algebra known as the \textit{rack algebra} was
defined from a finite rack $X$. In \cite{CEGS} quandle algebras were used
to enhance the quandle counting invariant, and later in \cite{HHNYZ} a 
modified rack algebra was used to enhance the rack counting invariant.
In \cite{BN}, rack algebras were generalized to \textit{birack algebras}.
In \cite{NP}, birack algebras were further generalized to 
\textit{birack shadow algebras} associated to pairs $X,S$ where $X$ is 
a birack and $S$ is an \textit{$X$-shadow}, i.e. a set with an $X$-action 
satisfying certain diagrammatically motivated properties.
Birack shadow algebra invariants as defined in \cite{NP} are well defined
for classical knots and links but not for virtual knots and links.

In this paper we introduce \textit{virtual birack shadow algebras} and 
\textit{twisted virtual birack shadow algebras}. As an application, we 
use modules over these algebras to enhance the virtual birack and twisted 
virtual birack counting invariants, and we show that the enhanced invariants
can detect orientation reversal and are not determined by the knot group,
the Arrow or Miyazawa polynomials, or the twisted Jones polynomial.

The paper is organized as follows. In section \ref{bb} we recall the basics
of virtual knots, virtual biracks and virtual birack shadows, then introduce 
virtual birack shadow algebras and modules and use these to define an infinite
family of enhanced virtual link invariants. In section \ref{tvbs} we 
introduce twisted virtual birack shadow algebras and modules and use 
these to define an infinite family of enhanced twisted virtual link invariants. 
We collect a few computations and examples in section \ref{cex}, and we 
conclude in section \ref{q} with a few open questions for further research.


\section{\large\textbf{Virtual birack shadow algebras}}\label{bb}

\subsection{Virtual knots and links}

\textit{Virtual knots and links} were introduced in \cite{K} by Louis Kauffman 
in 1996 as a combinatorial generalization of classical knots and links. 
Every oriented knot or link diagram is a planar 4-valent directed graph with 
crossing information at the vertices. Edges in this graph are called 
\textit{semiarcs}. Such a graph can be encoded as a 
\textit{signed Gauss code} by naming the crossings, choosing a base point
on each component, and then noting the order in which the over and under 
instances of each crossing are encountered when following the orientation
of each component. 
\[\includegraphics{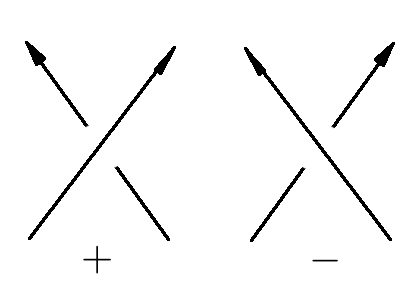} \quad 
\includegraphics{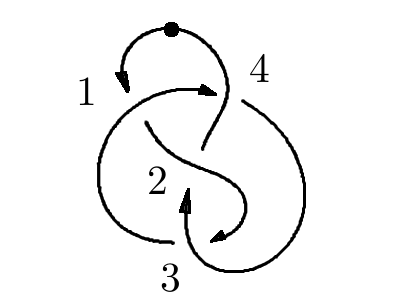} 
\raisebox{0.5in}{$U1^-O2^+U3^+O1^-U4^-O3^+U2^+O4^-$}
\]

A \textit{virtual link} is then an equivalence class of
signed Gauss codes under the Gauss code versions of the Reidemeister moves.
Attempting to reconstruct the original oriented link diagram from a signed
Gauss code, one quickly finds that some signed Gauss codes determine 
nonplanar diagrams, i.e. diagrams needing extra crossings not listed in
the Gauss code. An equivalence class of signed Gauss codes is a 
\textit{classical link} if it contains a signed Gauss code corresponding to a
planar oriented link diagram.

For the nonplanar diagrams, Kauffman introduced \textit{virtual crossings}
drawn as circled self-intersections which do not appear in Gauss codes
along with the rule that two virtual link diagrams are equivalent if their 
signed Gauss codes are equivalent by Gauss code Reidemeister moves. In terms 
of virtual knot diagrams, this breaks down into the \textit{virtual 
Reidemeister moves:}

\[
\begin{array}{cccc}
\multicolumn{4}{c}{
\includegraphics{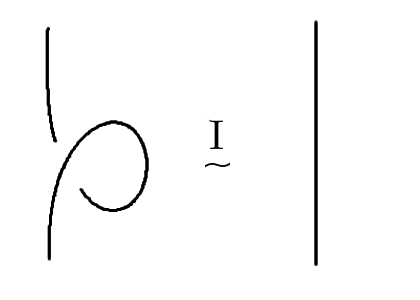} \quad
\includegraphics{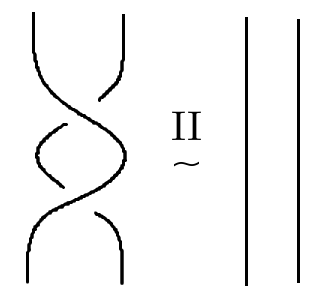} \quad
\includegraphics{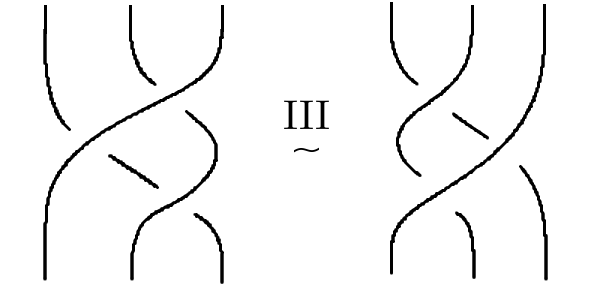}} \\
\includegraphics{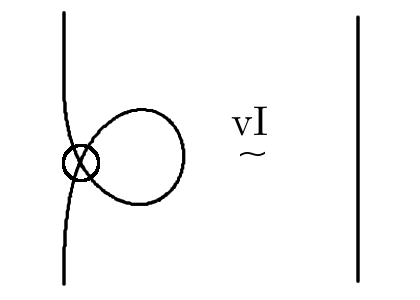} &
\includegraphics{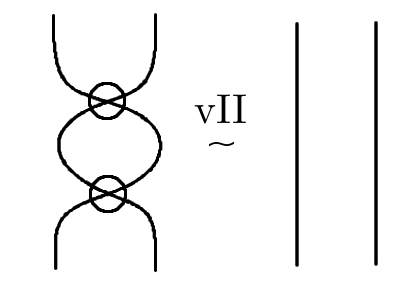} &
\includegraphics{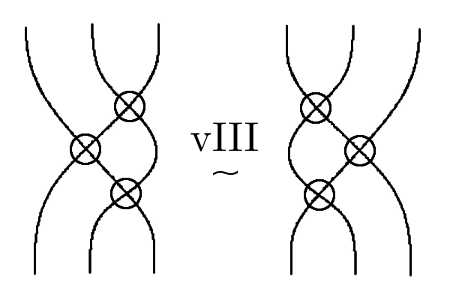} &
\includegraphics{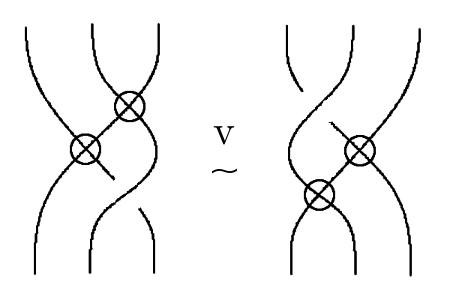} \\
\end{array}
\]

Geometrically, a virtual crossing can be understood as representing genus
in the surface on which the link diagram is drawn; thus, virtual knot theory
is closely related to the theory of knots and links in $I$-bundles over
compact surfaces \cite{KK}.
\[\includegraphics{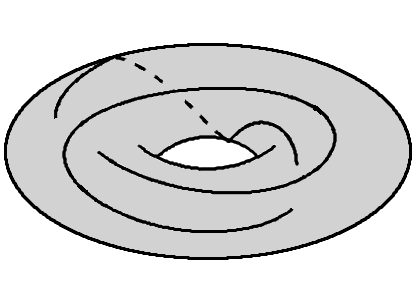} \quad 
\includegraphics{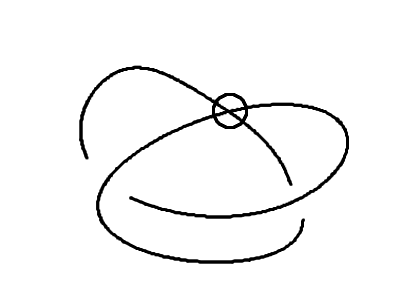}\]
Alternative interpretations of virtual knot diagrams exist, such as Dror 
Bar-Natan's circuit board analogy in which classical crossings represent logic
gates on a circuit board and virtual crossings represent the connections
between the gates (where we don't care which wire goes over or under).

Including virtual crossings restores the planarity of our knot diagram. The 
edges in a virtual knot diagram considered as a planar 4-valent graph with
crossing information (classical or virtual) specified at each vertex will
still be called \textit{semiarcs}; we also define a \textit{classical semiarc}
to be the result of dividing our virtual knot diagram only at classical 
crossing points, i.e. edges in the original nonplanar graph.


\subsection{Virtual Biracks}

A \textit{virtual birack} is an algebraic structure whose axioms encode
the \textit{blackboard framed virtual isotopy moves}, obtained from the
virtual isotopy moves by replacing the usual Reidemeister type I move with
the blackboard framed Reidemeister type I move:
\[
\includegraphics{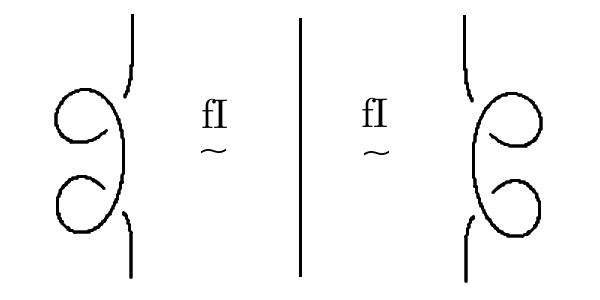} \]

\begin{definition}
\textup{Let $X$ be a set and define $\Delta:X\to X\times X$ by 
$\Delta(x) = (x, x)$. A \textit{virtual birack} structure on $X$ is a pair
of invertible maps $B,V:X \times X  \rightarrow X \times X$ satisfying the 
conditions
\begin{itemize}
\item[(i)]
$B$ and $V$ are  \textit{sideways invertible}, that is there exist unique 
invertible maps $S, vS : X \times X \rightarrow X \times X$ satisfying for all 
$x,y \in X$
\[S(B_1(x,y),x)=(B_2(x,y),y)\quad\mathrm{and}\quad 
vS(V_1(x,y),x)=(V_2(x,y),y),\]
\item[(ii)]
$B$ is \textit{diagonally invertible}, that is, the components 
$(S^{\pm 1} \circ \Delta)_1:X\to X$ and  $(S^{\pm 1}\circ \Delta)_2:X\to X$ 
of the compositions $S\circ \Delta$ and $S^{-1} \circ \Delta$ are bijections,
\item[(iii)] $V$ is \textit{self-inverse} and \textit{diagonal fixing}, 
that is, $V^2=\mathrm{Id}_{X\times X}$ and
$(vS\circ \Delta)_1 = (vS \circ \Delta)_2$;
\item[(iv)]
$B,V$  satisfy the \textit{set-theoretic Yang-Baxter equations:}
\begin{eqnarray*}
(B \times \mathrm{Id}_X)(\mathrm{Id}_X \times B)(B \times \mathrm{Id}_X) & = & 
(\mathrm{Id}_X \times B)(B \times \mathrm{Id}_X)(\mathrm{Id}_X\times B),\\
(V \times \mathrm{Id}_X)(\mathrm{Id}_X \times V)(V \times \mathrm{Id}_X) & = & 
(\mathrm{Id}_X \times V)(V \times \mathrm{Id}_X)(\mathrm{Id}_X \times V), 
\quad \mathrm{and} \\
(B \times \mathrm{Id}_X)(\mathrm{Id}_X \times V)(V \times \mathrm{Id}_X) & = &  
(\mathrm{Id}_X\times V)(V \times \mathrm{Id}_X)(\mathrm{Id}_X \times B).\\
\end{eqnarray*}
\end{itemize}
We will find it convenient to abbreviate $B_1(x,y)=y^x,\ B_2(x,y)=x_y,
V_1(x,y)=y^{\widetilde{x}},$ and $V_2(x,y)=x_{\widetilde{y}}$.}
\end{definition}

We also have
\begin{definition}\textup{
Let $X$ and $Y$ be sets with virtual birack maps $B_X,V_X$ and $B_Y,V_Y$. A
\textit{virtual birack homomorphism} is a map $f:X\to Y$ satisfying}
\[B_Y\circ (f\times f)=(f\times f)\circ B_X \quad \mathrm{and}\quad
V_Y\circ (f\times f)=(f\times f)\circ V_X. \]
\end{definition}

\begin{example}
\textup{Examples of virtual birack structures include:
\begin{itemize}
\item \textit{Racks.} A set $X$ with a self-distributive right-invertible
right action $\tr:X\times X\to X$ is a virtual birack under the maps 
$B(x,y)=(y\tr x,x)$ and $V(x,y)=(y,x)$.
\item \textit{$(v,t,s,r)$-Biracks.} Let 
$\check\Lambda=\mathbb{Z}[v^{\pm 1},t^{\pm 1},s,r^{\pm 1}]/(s^2-(1-tr)s)$. Then 
any $\check\Lambda$-module $X$ is a virtual birack under the operations 
$B(x,y)=(ty+sx,rx)$ and $V(x,y)=(vy,v^{-1}x)$.
\item \textit{Constant action virtual biracks.} Let $X$ be any set and let
$\sigma,\tau,\nu :X\to X$ be bijections. Then $B(x,y)=(\tau(y),\sigma(x))$,
$V(x,y)=(vy,v^{-1}x)$ defines a virtual birack structure on $X$ 
iff $\sigma\tau=\tau\sigma$, $\sigma\nu=\nu\sigma$ and $\tau\nu=\nu\tau$.
\item \textit{Fundamental virtual birack of a framed virtual link.} For a 
blackboard framed virtual link diagram $L$, let $G$ be a set of generators 
corresponding bijectively with the semiarcs of $L$. The set $W(L)$ of 
\textit{virtual birack words} in $L$ is defined recursively by the rules 
\begin{itemize}
\item[(i)] $g\in G\Rightarrow g\in W(L)$ and
\item[(ii)] $g,h\in W(L)\Rightarrow B_i^{\pm 1}(g,h), S_i^{\pm 1}(g,h)\in W(L),
V_i^{\pm 1}(g,h),$ and $vS_i^{\pm 1}(g,h)\in W(L)$ where $i=1,2$.
\end{itemize}
The set of equivalence classes of $W(L)$ under the equivalence relation
generated by the \textit{crossing relations} in definition \ref{def:lab} 
and the virtual birack axioms is then a virtual birack whose isomorphism 
class is independent of the diagram chosen to represent $L$. This virtual 
birack is called the \textit{fundamental virtual birack} of $L$, denoted 
$VB(L)$.
\end{itemize}}
\end{example}

\begin{definition}\textup{
Let $X=\{x_1,\dots,x_n\}$ be a finite set with virtual birack structure maps 
$B,V$. We can conveniently specify the maps $B,V$ with a \textit{virtual 
birack matrix}
\[M_X=\left[\begin{array}{c|c|c|c} B_1 & B_2 & V_1 & V_2 \\ \end{array}\right]\]
where if $B(x_i,x_j)=(x_k,x_l)$ and $V(x_i,x_j)=(x_m,x_n)$ then 
$(B_1)_{j,i}=k$, $(B_2)_{i,j}=l$, $(V_1)_{j,i}=m$ and $(V_2)_{i,j}=n$. These 
matrices can be understood as operation tables for the operations 
$(x_j)^{(x_i)}$, $(x_i)_{(x_j)}$, $(x_j)^{\widetilde{(x_i)}}$, and
$(x_i)_{\widetilde{(x_j)}}$ respectively. Note the
reversed order of $i,j$ in $B_1$ and $V_1$; this convention is chosen so that
the rows and outputs represent the same strand while the columns represent 
the other strand crossing over, under or virtually respectively.
}\end{definition}

\begin{example}\textup{
Let $X=\mathbb{Z}_5=\{1,2,3,4,5\}$. Then the $(v,t,s,r)$-virtual birack 
structure on $X$ given by $v=2,t=3,s=4,r=3$ has virtual birack matrix
\[M_X=\left[\begin{array}{ccccc|ccccc|ccccc|ccccc}
2 & 1 & 5 & 4 & 3 & 3 & 3 & 3 & 3 & 3 & 4 & 4 & 4 & 4 & 4 & 2 & 2 & 2 & 2 & 2 \\
5 & 4 & 3 & 2 & 1 & 1 & 1 & 1 & 1 & 1 & 3 & 3 & 3 & 3 & 3 & 4 & 4 & 4 & 4 & 4 \\
3 & 2 & 1 & 5 & 4 & 4 & 4 & 4 & 4 & 4 & 2 & 2 & 2 & 2 & 2 & 1 & 1 & 1 & 1 & 1 \\
1 & 5 & 4 & 3 & 2 & 2 & 2 & 2 & 2 & 2 & 4 & 4 & 4 & 4 & 4 & 3 & 3 & 3 & 3 & 3 \\
4 & 3 & 2 & 1 & 5 & 5 & 5 & 5 & 5 & 5 & 5 & 5 & 5 & 5 & 5 & 5 & 5 & 5 & 5 & 5 \\
\end{array}\right]\]
}\end{example}

\begin{definition}\label{def:lab}\textup{
A \textit{virtual birack labeling} of an oriented blackboard framed virtual 
link diagram $L$ by a virtual birack $X$, also called an \textit{$X$-labeling} 
of $L$, is an assignment of an element of $X$ to each semiarc in $L$ such 
that the conditions
\[\includegraphics{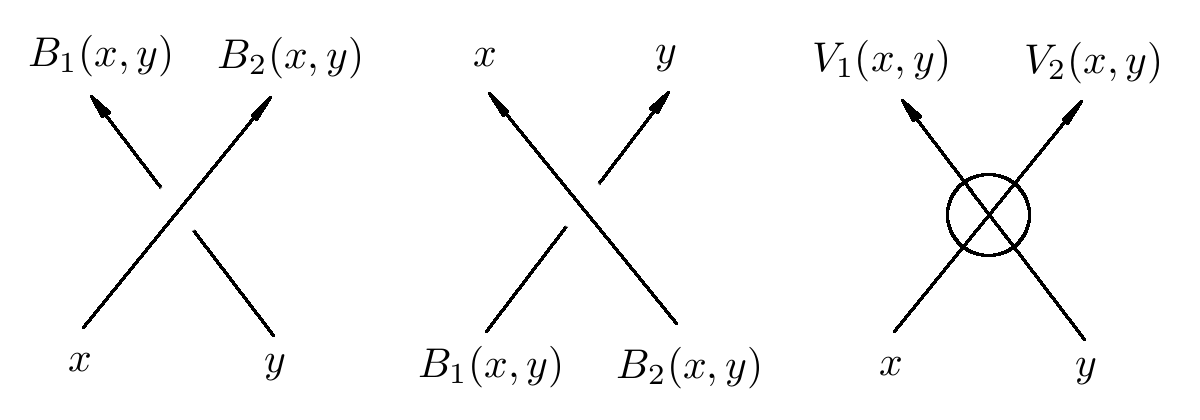}\]
are satisfied at every crossing.}
\end{definition}

The virtual birack axioms are the result of translating the the oriented 
blackboard framed virtual Reidemeister moves into conditions on labelings of 
semiarcs using the labeling rule in definition \ref{def:lab}. Invertibility
of $B,V$ and existence, uniqueness and invertibility of $S$ and $vS$ encode the
Reidemeister II and vII moves; diagonal invertibility of $B$ satisfies the
framed I move, while the requirement that $V$ fixes the diagonal satisfies 
the vI move. The Yang-Baxter equations then encode the Reidemeister
III, vIII and v moves. Indeed, by construction we have:
\begin{theorem}
\textup{If $X$ is a virtual birack and $L$ and $L'$ are virtual link diagrams 
related 
by blackboard framed virtual Reidemeister moves, then there is a bijection 
between the sets of virtual birack labelings of $L$ and $L'$.}
\end{theorem}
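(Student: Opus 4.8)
The plan is to reduce to the case of a single move and then to check each move type locally. Since ``related by blackboard framed virtual Reidemeister moves'' means related by a finite sequence of such moves, and a composition of bijections is again a bijection, it suffices to produce, for each individual move, a bijection between the $X$-labelings of the two diagrams differing by that move. Moreover, outside a disk $D$ containing the local picture of the move, the two diagrams $L$ and $L'$ coincide as planar $4$-valent graphs with crossing data, so any $X$-labeling of $L$ restricts to an $X$-labeling of the common part; the task is therefore purely local, namely to match up the ways of labeling the semiarcs inside $D$ that are compatible with a fixed labeling of the finitely many semiarcs meeting $\partial D$.

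For each move I would fix the labels on the boundary semiarcs and exhibit an explicit, natural bijection between the extensions to the interior. For the framed Reidemeister I move the disk meets the rest of the diagram in one incoming and one outgoing semiarc; on the trivial side there is exactly one label and it must be shared, while on the kink side the crossing relation forces the interior label to be a preimage under a component of $S^{\pm 1}\circ\Delta$, so diagonal invertibility of $B$ (axiom (ii)) gives exactly one compatible extension, and the framing condition is preserved. The Reidemeister II move (strands antiparallel or parallel, either crossing sign) is handled by invertibility of $B$ together with the existence, uniqueness and invertibility of the sideways map $S$ in axiom (i): the pair of interior semiarcs is uniquely determined and the resulting boundary data agrees with the uncrossed picture. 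The virtual Reidemeister I and II moves are handled identically using the clauses of axioms (i) and (iii) for $V$ and $vS$, namely $V^2=\mathrm{Id}$ and $(vS\circ\Delta)_1=(vS\circ\Delta)_2$. Finally the Reidemeister III move, the virtual III move, and the mixed virtual move correspond termwise to the three Yang--Baxter equations in axiom (iv): in each case the two sides of the disk carry the same three boundary semiarcs, and the composite of crossing maps that reads the interior off the boundary is the left-hand side of the relevant equation on one diagram and the right-hand side on the other, so equality of these composites is exactly the statement that labelings correspond bijectively.

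The one genuinely fiddly point — and the place I would spend the most care — is that each Reidemeister move has several variants according to the orientations of the strands and the signs of the classical crossings, and one must check that the axioms as stated cover all of them. This is where requiring $B$ and $V$ to be \emph{invertible} and \emph{sideways invertible} (rather than merely, say, right-invertible) does the work: the ``negative'' crossing relations are expressed via $B^{-1}$, $V^{-1}$, $S^{\pm 1}$, $vS^{\pm 1}$, all of which are bijections, so the local extension is still unique in every variant, and the several orientations of the III moves are obtained from the displayed one by pre- and post-composing with the invertible sideways maps, which does not disturb the equality of composites. Once all variants are accounted for, assembling the local bijections over a sequence of moves completes the argument; indeed, since the virtual birack axioms were defined precisely by translating these moves through the labeling rule of Definition \ref{def:lab}, the verification amounts to reading that definition against each picture.
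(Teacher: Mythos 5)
Your proposal is correct and follows exactly the route the paper intends: the paper offers no formal proof, simply noting that the axioms were obtained by translating the blackboard framed virtual Reidemeister moves through the labeling rule ("by construction"), and your argument is the careful expansion of that remark — locality, fixed boundary data, one axiom per move, plus the check of orientation/sign variants. Nothing is missing.
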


\begin{definition}
\textup{Let $L$ be a blackboard framed virtual link diagram and $X$ a finite 
virtual birack. The cardinality of the set of $X$-labelings of $L$, denoted 
$\Phi^B_{X}(L)$, is the \textit{basic virtual birack counting invariant} of 
$L$ with respect to $X$.}
\end{definition}

Note that $\Phi^B_X(L)=|\mathrm{Hom}(VB(L),X)|$ where $\mathrm{Hom}(VB(L),X)$
is the set of virtual birack homomorphisms from the fundamental virtual
birack $VB(L)$ of $L$ to the labeling birack $X$.

Let $X$ be a virtual birack. The maps $\alpha,\pi:X\to X$ defined by 
$\alpha=(S^{-1}\circ \Delta)_2^{-1}$ and $\pi=(S^{-1}\circ \Delta)_1\circ \alpha$
give the virtual birack labels on the semiarcs in a framed type I move:
\[\includegraphics{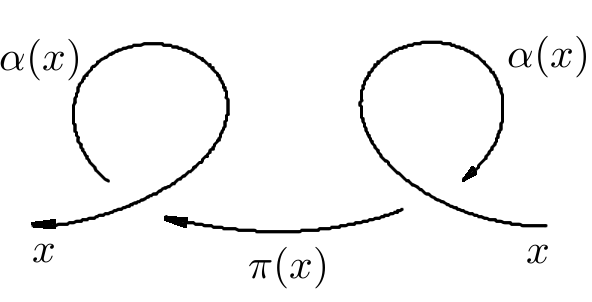}\]
Since $\pi$ represents going through a positive kink, $\pi$ is known as the
\textit{kink map} of the virtual birack $X$. The order of $\pi$, i.e. the 
smallest positive integer $N$ such that $\pi^N=\mathrm{Id}:X\to X$, is the 
\textit{birack rank} or \textit{birack characteristic} of $X$. 

Virtual birack labelings of a virtual knot or link are preserved by 
blackboard framed virtual Reidemeister moves but not in general by 
Reidemeister I moves. However, if a virtual birack $X$ has finite birack 
rank $N$, then labelings of $X$ are preserved by \textit{$N$-phone cord moves}:
\[\includegraphics{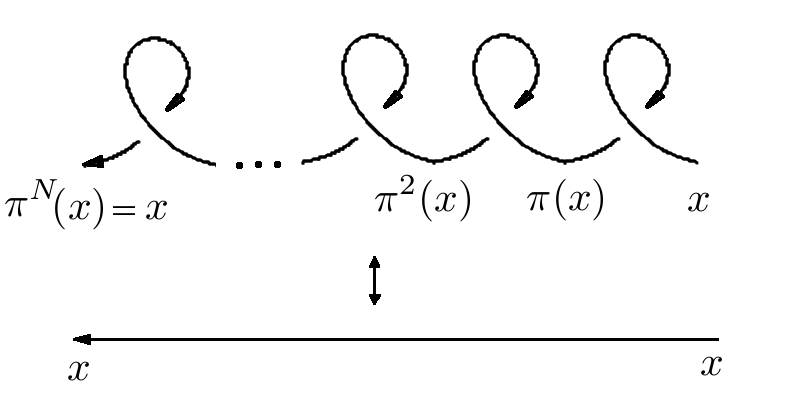}\]
In particular, if $L$ and $L'$ are related by framed virtual Reidemeister and
$N$-phone cord moves, then $\Phi^B_X(L)=\Phi^B_X(L')$. See \cite{N} for more.

Thus, if $L$ is an unframed oriented virtual link of $c$ components, there is 
a $c$-dimensional lattice of framings of $L$ corresponding to writhe vectors 
in $\mathbb{Z}^c$, each representing a distinct framed link. We therefore have, 
associated to an unframed oriented virtual link, a $\mathbb{Z}^c$-lattice of 
basic birack counting invariants. The fact that $N$-phone cord moves preserve 
$\Phi^B_X(L)$ then implies that this lattice is tiled with repeats of a 
$c$-dimensional tile corresponding to $(\mathbb{Z}_N)^c$. In particular, the 
sum over one tile of the basic counting invariants yields an invariant of 
unframed oriented virtual links known as the \textit{integral virtual birack 
counting invariant}.

\begin{definition}\textup{
Let $X$ be a finite virtual birack of birack rank $N$ and let $L$ be 
an unframed oriented virtual link of
$c$ components. For every $\mathbf{w}\in (\mathbb{Z}_N)^c$, let 
$(L,\mathbf{w})$ be a diagram of $L$ with framing vector $\mathbf{w}$. Then
\[\Phi_X^{\mathbb{Z}}(L)=\sum_{\mathbf{w}\in (\mathbb{Z}_N)^c} \Phi_X^B(L,\mathbf{w})\]
is the \textit{integral virtual birack counting invariant}.
}\end{definition}

For instance, if $N=2$ then for the virtual Hopf link we have
\[\scalebox{0.7}{\includegraphics{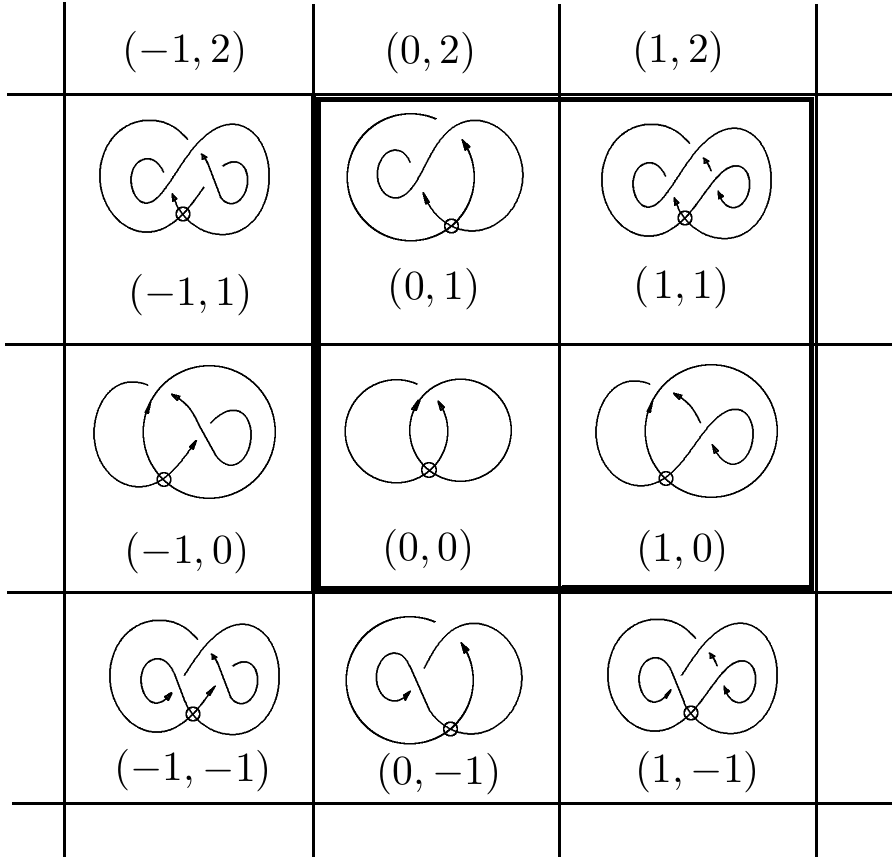}} \quad 
\raisebox{1in}{$\begin{array}{rcl} 
\Phi_X^{\mathbb{Z}}(L) & = &  \Phi_X^B(L,(0,0)) + \Phi_X^B(L,(1,0))\\
 & & +\Phi_X^B(L,(0,1))+\Phi_X^B(L,(1,1)). \\
\end{array}$}\]

By construction and theorem 1, we have
\begin{theorem}\textup{
If $X$ is a finite virtual birack and $L$ and $L'$ are virtually isotopic
oriented virtual link diagrams, then 
$\Phi_X^{\mathbb{Z}}(L)=\Phi_x^{\mathbb{Z}}(L')$.
}\end{theorem}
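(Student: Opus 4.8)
The plan is to reduce this to Theorem 1 together with the invariance of the basic counting invariant under $N$-phone cord moves. Recall that (unframed) virtual isotopy of diagrams is generated by the virtual Reidemeister moves together with the classical Reidemeister I, II and III moves, and that the blackboard framed virtual Reidemeister moves are obtained from these by replacing the classical type I move with the framed type I move. So it suffices to show that $\Phi_X^{\mathbb{Z}}$ is unchanged (a) under a blackboard framed virtual Reidemeister move and (b) under a single classical type I move; together these generate all of virtual isotopy.

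For (a): such a move sends a diagram with framing vector $\mathbf{w}$ to another diagram with the same framing vector $\mathbf{w}$, and by Theorem 1 it induces a bijection between the corresponding sets of $X$-labelings, so $\Phi_X^B(L,\mathbf{w})$ is unchanged for every $\mathbf{w}\in(\mathbb{Z}_N)^c$ and summing over $\mathbf{w}$ gives the claim. Implicit here is that the summands are well defined at all: ``a diagram of $L$ with framing vector $\mathbf{w}$'' is only determined up to blackboard framed virtual Reidemeister moves and the insertion or deletion of canceling kinks, while $N$ kinks on one component can be removed by an $N$-phone cord move without changing the number of $X$-labelings (see \cite{N}); hence $\Phi_X^B(L,\mathbf{w})$ depends only on the unframed link $L$ and on the class of $\mathbf{w}$ in $(\mathbb{Z}_N)^c$, so the sum defining $\Phi_X^{\mathbb{Z}}(L)$ makes sense.

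For (b): a classical type I move on the $i$-th component changes the writhe vector by $\pm\mathbf{e}_i$ and otherwise leaves the diagram alone, so if $L'$ is obtained from $L$ in this way, then a diagram of $L'$ with framing $\mathbf{w}$ is, up to blackboard framed virtual Reidemeister and $N$-phone cord moves, a diagram of $L$ with framing $\mathbf{w}\mp\mathbf{e}_i$, whence $\Phi_X^B(L',\mathbf{w})=\Phi_X^B(L,\mathbf{w}\mp\mathbf{e}_i)$. Since $\mathbf{w}\mapsto\mathbf{w}\mp\mathbf{e}_i$ is a bijection of $(\mathbb{Z}_N)^c$ (a cyclic shift in the $i$-th coordinate), summing over the whole tile gives
\[\Phi_X^{\mathbb{Z}}(L')=\sum_{\mathbf{w}\in(\mathbb{Z}_N)^c}\Phi_X^B(L,\mathbf{w}\mp\mathbf{e}_i)=\sum_{\mathbf{w}\in(\mathbb{Z}_N)^c}\Phi_X^B(L,\mathbf{w})=\Phi_X^{\mathbb{Z}}(L),\]
and combining (a) and (b) shows $\Phi_X^{\mathbb{Z}}$ is constant on virtual isotopy classes.

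The only non-routine inputs are Theorem 1 and the $N$-phone cord move invariance imported from \cite{N}; everything else is bookkeeping with framing vectors. The point that demands the most care is precisely that a single type I move does \emph{not} fix the individual summand $\Phi_X^B(L,\mathbf{w})$, and it is only the summation over a full $(\mathbb{Z}_N)^c$ tile of framings that absorbs the change, so the argument must be organized around the orbit of framings rather than around any fixed framing.
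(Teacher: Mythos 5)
Your argument is correct and follows exactly the route the paper intends: the paper offers no separate proof beyond ``by construction and Theorem 1,'' relying on the preceding discussion that framed moves preserve each summand $\Phi_X^B(L,\mathbf{w})$, that $N$-phone cord moves make the summands depend only on $\mathbf{w}$ modulo $N$, and that a type I move merely permutes the $(\mathbb{Z}_N)^c$ tile of framings. Your write-up simply makes that bookkeeping explicit, including the key point that a single kink shifts rather than fixes the individual summands.
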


\begin{example}\label{ex1}\textup{
Let $X$ be the virtual birack with underlying set $\{1,2\}$ and virtual birack 
matrix
\[M_X=\left[\begin{array}{cc|cc|cc|cc}
1 & 1 & 2 & 2 & 2 & 2 & 2 & 2 \\
2 & 2 & 1 & 1 & 1 & 1 & 1 & 1 \\
\end{array}\right].\]
This matrix describes the labelings of a virtual link diagram with 
elements of $X$ by the following rules: when going through a classical 
undercrossing, semiarc labels stay the same; at classical overcrossings 
and at virtual crossings from either direction, semiarc labels switch 
from 1 to 2 or from 2 to 1. The kink map here is the transposition 
$\pi=(12)$, which has order 
$N=2$; thus, for any link $L$, we must consider a complete set 
of diagrams with writhes mod $2$ on each component. For example, the
virtual Hopf link has $c=2$ components and thus we have $N^c=2^2=4$ 
diagrams, corresponding to writhe vectors in $(\mathbb{Z}_N)^c=
(\mathbb{Z}_2)^2=\{(0,0),(1,0),(0,1),(1,1)\}$ in a complete tile of
framings mod $2$.
It is not hard to see that, for instance, the $\mathbf{w}=(0,0)$ framing 
has no valid labelings by $X$:
\[\includegraphics{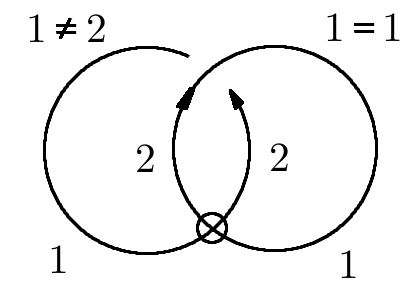}\]
It turns out that there are four valid labelings, all in the 
$\mathbf{w}=(1,0)$ framing. Thus, we have $\Phi_x^{\mathbb{Z}}(L)=4.$
\[\raisebox{-1in}{\scalebox{0.7}{\includegraphics{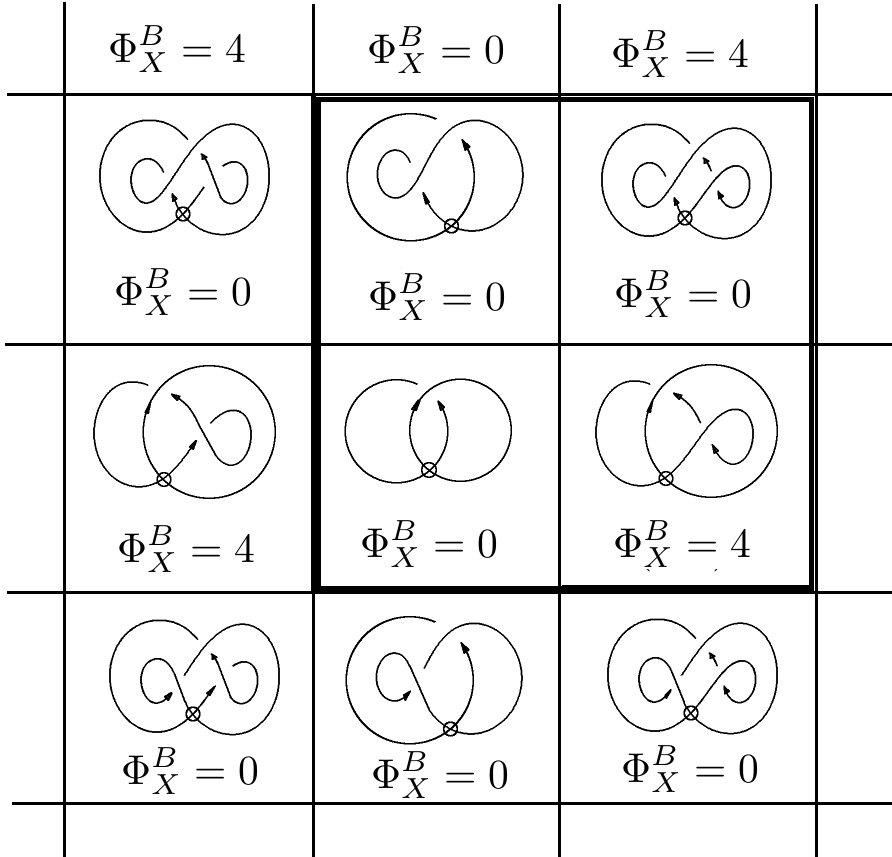}}} 
\quad \Phi_x^{\mathbb{Z}}(L)=0+4+0+0=4.\]
}\end{example}

\subsection{Virtual birack shadows}

\textit{Shadow labelings}, that is, labelings of the regions between the
arcs of a knot diagram, have been used in connection with rack, quandle,
birack and biquandle counting invariants in \cite{CJKLS,CN2,NP}. We now extend
this idea to the case of virtual biracks.

\begin{definition}
\textup{Let $X$ be a virtual birack and $S$ a set. A \textit{virtual birack 
shadow} structure or \textit{$X$-shadow} structure on $S$ is a right 
invertible right action of $X$ on $S$ (i.e. a map 
$\cdot:S\times X\rightarrow S)$ satisfying for all $x,y \in X$ and $A \in S$:}
\[(A \cdot y_x) \cdot x^y
=(A \cdot y_{\widetilde{x}}) \cdot x^{\widetilde{y}} 
= (A \cdot  x) \cdot y\]
\end{definition}

Elements of an $X$-shadow $S$ are used to label the regions between the strands
in an $X$-labeled virtual link diagram: 

\begin{definition}
\textup{Let $X$ be a virtual birack, $S$ an $X$-shadow, and $L$ a 
blackboard framed oriented virtual link diagram. Then a \textit{shadow 
labeling} or \textit{$X,S$-labeling} of $L$ is an assignment of elements of 
$X$ to the semiarcs in $L$ and elements of $S$ to regions between the semiarcs 
of $L$ such that the virtual birack labeling conditions are satisfied at every
crossing and at every semiarc we have
\[\includegraphics{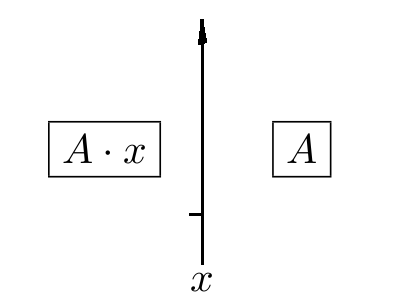}\]
}\end{definition}

The shadow axioms are chosen to guarantee that shadow labelings are 
well-defined at classical and virtual crossings. It is straightforward to show
that the requirement that shadow labels are well-defined around classical
and virtual crossings is sufficient to guarantee that shadow labelings are
preserved under Reidemeister moves. Moreover, well-definedness of shadow 
labels around positive crossings implies well-definedness around negative 
crossings.
\[\includegraphics{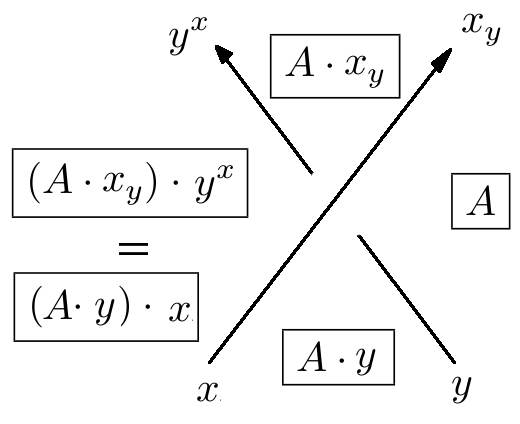}\quad \quad
\includegraphics{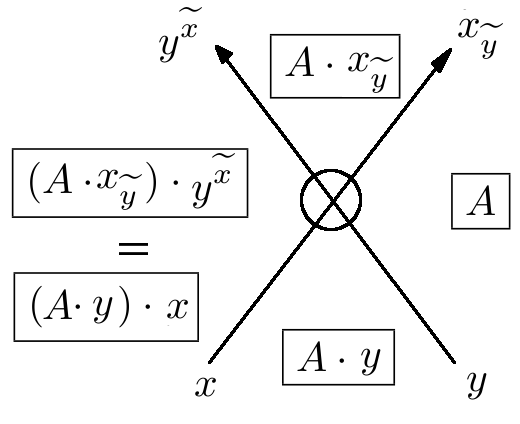}
\]

Let $X=\{x_1,\dots, x_n\}$ be a virtual birack and $S=\{A_1,\dots, A_m\}$ 
an $X$-shadow. We can specify the shadow operation with an
$m\times n$ matrix $M_S$ whose row $i$ column $j$ entry is $k$ such
that $A_i\cdot x_j=A_k$. Note that right-invertibility of $\cdot$ requires
the columns of $M_S$ to be permutations, and the columns corresponding to
birack elements $x$ and $\pi(x)$ are the same.

\begin{example}
\textup{Let $X$ be a virtual birack, $S$ any set and $\sigma:S\to S$ 
any bijection. Then $A\cdot x=\sigma(A)$ defines an $X$-shadow structure on 
$S$, called a \textit{constant action shadow}: bijectivity of $\sigma$ gives
us right-invertibility, and we have for any $x,y\in X$
\[(A \cdot y_x) \cdot x^y =(A \cdot y_{\widetilde{x}}) \cdot x^{\widetilde{y}} 
= (A \cdot  x) \cdot y=\sigma^2(x).\] 
In particular, if $S=\{A,B\}$
and every column is the transposition $(AB)$,
then $X,S$-labelings are called \textit{checkerboard colorings} of $L$.}
\end{example}

\begin{definition}
\textup{Let $X$ be a virtual birack with birack rank $N$ and $S$ an 
$X$-shadow. For any blackboard framed oriented virtual link $L$ of $c$ 
components, let $\mathcal{L}(L,X,S)$ be the set of $X,S$-labelings of $L$. 
Then set}
\[\Phi^{\mathbb{Z}}_{X,S}(L)
=\sum_{w \in (\mathbb{Z}_n)}|\mathcal{L}((L,\mathbf{w}),X,S)|.\] 
\end{definition}

It is apparent that the number of shadow labelings for a fixed $X$-labeling 
is equal to the cardinality of $S$, since a choice of shadow label for one 
``source'' region determines shadow labels for all other regions, and there
are $|S|$ such choices. Hence, we have
\begin{theorem} Let $X$ be a finite birack. For any $X$-shadow and 
oriented virtual link $L$, we have
$\Phi^{\mathbb{Z}}_{X,S}(L)= |S| \Phi^{\mathbb{Z}}_{X}(L).$
\end{theorem}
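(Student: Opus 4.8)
The plan is to show that the set $\mathcal{L}((L,\mathbf{w}),X,S)$ of $X,S$-labelings of a fixed framed diagram fibers over the set of $X$-labelings, with every fiber having exactly $|S|$ elements; summing over the writhe tile $\mathbf{w}\in(\mathbb{Z}_N)^c$ then yields the claimed identity. Concretely, I would fix a framed diagram $(L,\mathbf{w})$, let $R$ denote the set of complementary regions of the underlying 4-valent planar graph, and consider the forgetful map $\rho:\mathcal{L}((L,\mathbf{w}),X,S)\to\mathrm{Hom}(VB(L),X)$ that discards the region labels and retains only the semiarc labels. The content of the theorem is that $\rho$ is surjective with all fibers of size $|S|$.

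The first step is to observe that for a fixed $X$-labeling, the region-labeling conditions are a coherent system: once a shadow label $A\in S$ is chosen for one region, the label on any adjacent region (across a semiarc) is forced by the semiarc rule $A'=A\cdot x$ where $x$ is the $X$-label of the intervening semiarc. Since the planar graph of a link diagram has connected region-adjacency graph, propagating along any path from the ``source'' region assigns a label to every region; the remaining task is well-definedness, i.e. that traversing any closed loop in the region-adjacency graph returns the original label. Here I would invoke the fact already established in the excerpt that the shadow axioms guarantee shadow labelings are well-defined around each classical and virtual crossing — equivalently, that the two ways of getting from one region to a diagonally opposite region across a crossing agree — and that well-definedness around every crossing, together with the obvious well-definedness across a single semiarc run between crossings, forces consistency around every cycle in the region graph (every such cycle being generated by the small loops around individual crossings plus contractible loops in the plane). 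Thus each of the $|S|$ choices of source-region label extends uniquely to a genuine $X,S$-labeling, so $|\rho^{-1}(f)|=|S|$ for every $f$, and in particular $\rho$ is surjective whenever $\mathrm{Hom}(VB(L),X)$ is nonempty (and the statement is trivially $0=0$ otherwise).

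Given this fiber-counting, the computation finishes quickly:
\[
|\mathcal{L}((L,\mathbf{w}),X,S)|=\sum_{f\in\mathrm{Hom}(VB(L),X)}|\rho^{-1}(f)|=|S|\cdot|\mathrm{Hom}(VB(L),X)|=|S|\cdot\Phi^B_X(L,\mathbf{w}),
\]
using $\Phi^B_X(L,\mathbf{w})=|\mathrm{Hom}(VB(L),X)|$ from the earlier remark. Summing over $\mathbf{w}\in(\mathbb{Z}_N)^c$ gives
\[
\Phi^{\mathbb{Z}}_{X,S}(L)=\sum_{\mathbf{w}}|\mathcal{L}((L,\mathbf{w}),X,S)|=|S|\sum_{\mathbf{w}}\Phi^B_X(L,\mathbf{w})=|S|\,\Phi^{\mathbb{Z}}_X(L).
\]

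I expect the main obstacle to be making the well-definedness argument genuinely airtight rather than hand-waved: one must argue that the cycle space of the region-adjacency graph of a planar link diagram is spanned by the small loops encircling individual crossings (and contractible faces), so that local consistency at each crossing — which the shadow axioms supply — propagates to global consistency. The excerpt essentially asserts this already ("the requirement that shadow labels are well-defined around classical and virtual crossings is sufficient to guarantee that shadow labelings are preserved under Reidemeister moves"), so in the write-up I would lean on that and keep the proof short, since the theorem is really just the bookkeeping observation that each $X$-labeling admits exactly $|S|$ shadow extensions.
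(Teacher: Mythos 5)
Your proposal is correct and is essentially the paper's own argument: the paper justifies the theorem by the one-line observation that a choice of shadow label on a single ``source'' region determines all other region labels, so each $X$-labeling admits exactly $|S|$ shadow extensions, and summing over the writhe tile gives the identity. Your write-up simply makes the fiber-counting and the well-definedness-around-crossings step more explicit than the paper does.
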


\begin{corollary} 
Let $X$ be a virtual birack and $S$ an $X$-shadow. If $L$ and $L'$ 
are virtually isotopic virtual links, then we have
$\Phi^{\mathbb{Z}}_{X,S}(L)= \Phi^{\mathbb{Z}}_{X,S} (L').$
\end{corollary}

\begin{example}\label{ex2}\textup{
Let $X$ be the virtual birack from example \ref{ex1} and let $S$ be the
checkerboard coloring shadow, i.e. the set $S=\{A,B\}$ with shadow matrix 
\[M_S=\left[\begin{array}{cc} B & B \\ A & A\end{array}\right].\] Then over a 
complete period of framings mod 2, there are eight shadow labelings of
the virtual Hopf link from example \ref{ex1}, i.e. $\Phi_{X,S}^{\mathbb{Z}}(L)=8$.
}\end{example}

\subsection{Virtual Birack Shadow Algebras and Modules} \label{vbs}

The virtual shadow counting invariant is an enhancement of the integral 
virtual  birack counting invariant, but an enhancement that is equivalent 
to the original unenhanced invariant. To get a stronger enhancement, we need 
additional algebraic structure.

\begin{definition}\label{def:vba}
\textup{Let $X$ be a set with virtual birack structure $B(x,y)=(y^x,x_y)$
and $V(x,y)=(y^{\widetilde{x}},x_{\widetilde{y}})$ and let $S$ be an $X$-shadow.
Then the \textit{virtual birack shadow algebra} $\mathbb{Z}[X,S]$ is the quotient
of the polynomial algebra
$\mathbb{Z}[v_{A,x,y}^{\pm 1},t_{A,x,y}^{\pm 1},s_{A,x,y},r_{A,x,y}^{\pm 1}]$
for $A\in S, x,y\in X$ modulo the ideal $I$ generated by elements of the form}
\[\begin{array}{lllll}
\bullet & v_{A,y_{\widetilde{x}},x^{\widetilde{y}}}-v_{A,x,y} & \quad 
\bullet & v_{A\cdot y_{\widetilde{z}},x,z^{\widetilde{y}}}v_{A,y,z}-v_{A\cdot(x_{\widetilde{y}})_{\widetilde{z}},y^{\widetilde{x}},z^{\widetilde{x_{\widetilde{y}}}}}v_{A,x,y} \\
\bullet &v_{A\cdot(x_{\widetilde{y}})_{\widetilde{z}},y^{\widetilde{x}},z^{\widetilde{x_{\widetilde{y}}}}}v_{A,x_{\widetilde{z^{\widetilde{y}}}},y_{\widetilde{z}}}-v_{A\cdot z,x,y}v_{A,y,z} & \quad 
\bullet & v_{A\cdot z,x,y}v_{A,x_{\widetilde{y}},z}-v_{A\cdot y_{\widetilde{z}},x,z^{\widetilde{y}}} v_{A,x_{\widetilde{z^{\widetilde{y}}}},y_{\widetilde{z}}} \\
\bullet &v_{A\cdot y_{z},x,z^{y}}v_{A,x_{\widetilde{z^y}},y_z}-v_{A\cdot z,x,y},v_{A,x_{\widetilde{y}},z} & \quad 
\bullet & v_{A,x_{\widetilde{z^y}},y_z}r_{A,y,z}-r_{A\cdot(x_{\widetilde{y}})_{\widetilde{z}},y^{\widetilde{x}},z^{\widetilde{x_{\widetilde{y}}}}}v_{A\cdot z,x,y} \\
\bullet & v_{A\cdot y_{z},x,z^{y}}t_{A,y,z}-t_{A\cdot(x_{\widetilde{y}})_{\widetilde{z}},y^{\widetilde{x}},z^{\widetilde{x_{\widetilde{y}}}}}v_{A,x_{\widetilde{y}},z} & \quad 
\bullet & v_{A\cdot y_{z},x,z^{y}}s_{A,y,z}-s_{A\cdot(x_{\widetilde{y}})_{\widetilde{z}},y^{\widetilde{x}},z^{\widetilde{x_{\widetilde{y}}}}}v_{A\cdot z,x,y} \\
\bullet & r_{A,x_{z^y},y_z}r_{A\cdot y_z,x,z^y}-              r_{A,x_y,z}r_{A\cdot z,x,y}& \quad 
\bullet & t_{A,x_{z^y},y_z}r_{A,y,z}-         r_{A\cdot x_{yz},y^x,z^{x_y}}t_{A\cdot z,x,y} \\
\bullet & s_{A,x_{z^y},y_z}r_{A\cdot y_z,x,z^y}-r_{A\cdot x_{yz},y^x,z^{x_y}}s_{A\cdot z,x,y}& \quad 
\bullet & t_{A\cdot y_z,x,z^y}t_{A,y,z}-t_{A\cdot x_{yz},y^x,z^{x_y}}t_{A,x_y,z} \\
\bullet & t_{A\cdot y_z,x,z^y}s_{A,y,z}-s_{A\cdot x_{yz},y^x,z^{x_y}}t_{A\cdot z,x,y} & & & \\
\bullet & \multicolumn{4}{l}{s_{A\cdot y_z,x,z^y}-        t_{A\cdot x_{yz},y^x,z^{x_y}}s_{A,x_y,z}r_{A\cdot z,x,y}
                                -s_{A\cdot x_{yz},y^x,z^{x_y}}s_{A\cdot z,x,y}, \quad
\mathrm{and}}  \\
\bullet & \multicolumn{4}{l}{1-\displaystyle{\prod_{k=0}^{N-1}
(t_{A\cdot^{-1}\alpha(\pi^{k}(x)),\pi^k(x),\alpha(\pi^k(x))}
 r_{A\cdot^{-1}\alpha(\pi^{k}(x)),\pi^k(x),\alpha(\pi^k(x))}
+s_{A\cdot^{-1}\alpha(\pi^{k}(x)),\pi^k(x),\alpha(\pi^k(x))})}} \\
\end{array}\]
\end{definition}

The virtual birack algebra is motivated by the $(v,t,s,r)$-virtual birack
definition; given an $X,S$-labeled link diagram, we define a secondary
labeling of the semiarcs by beads which obey $(v,t,s,r)$-style relations
with coefficients which depend on the $X,S$ labels at the crossing.
The relations are obtained from the framed virtual Reidemeister moves and the
$N$-phone cord move using the bead relations
\[\begin{array}{ccc}
\includegraphics{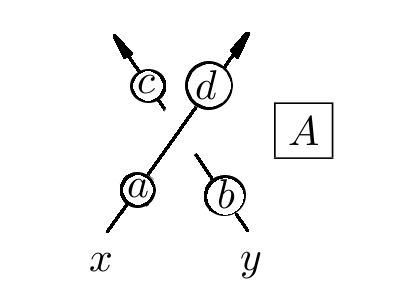} &\includegraphics{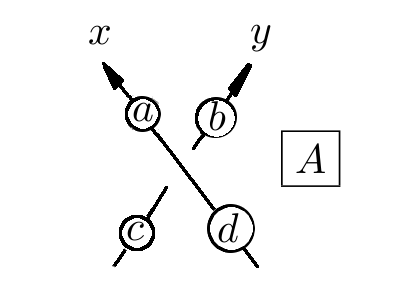} 
&\includegraphics{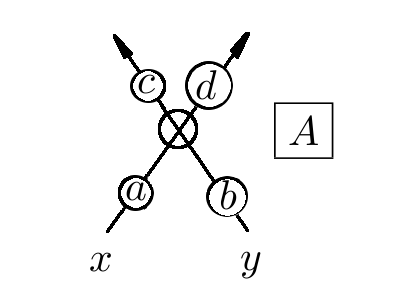} \\
\begin{array}{rcl}
c & = & t_{A,x,y} b+s_{A,x,y} a \\
d & = & r_{A,x,y} b \\
\end{array} &
\begin{array}{rcl}
c & = & t_{A,x,y} b+s_{A,x,y} a \\
d & = & r_{A,x,y} b \\
\end{array} &
\begin{array}{rcl}
c & = & v_{A,x,y} b \\
d & = & v_{A,x,y}^{-1}a. \\
\end{array} \\
\end{array}\]

For instance, the virtual move vIII yields the requirements
\[\raisebox{-1.2in}{\includegraphics{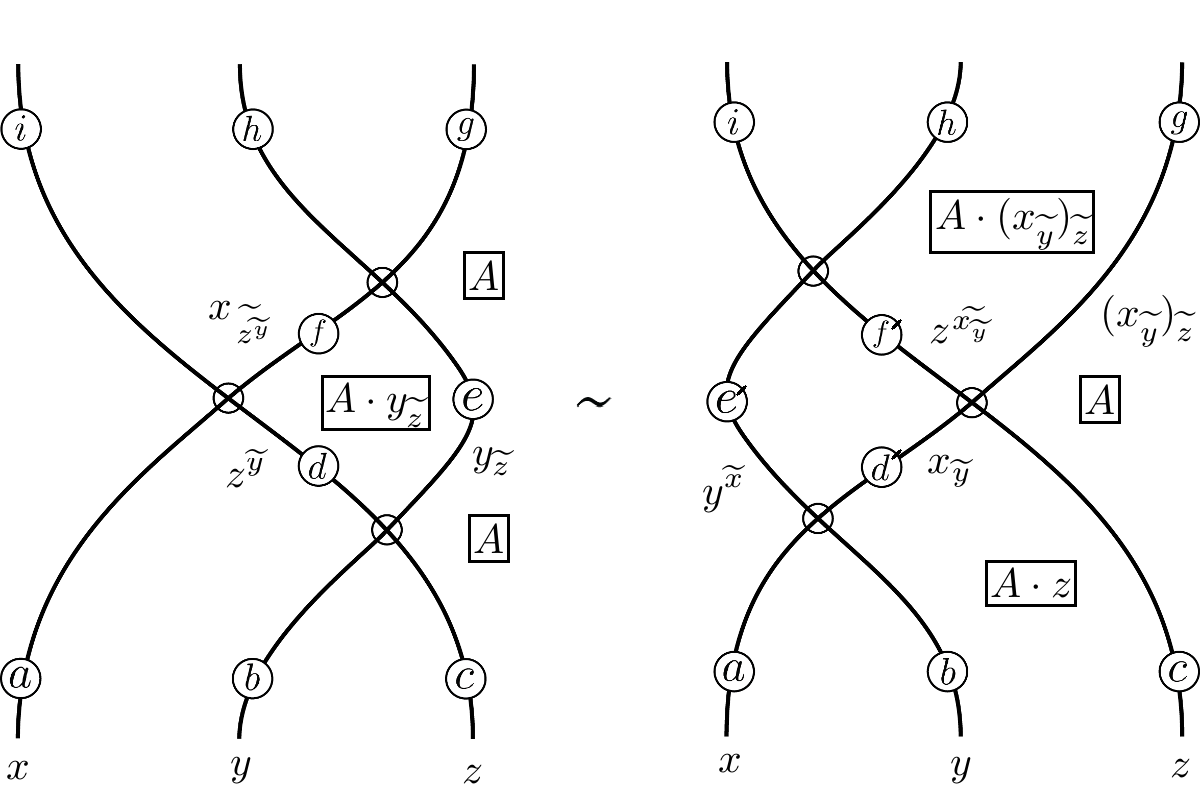}}\quad
\begin{array}{rcl}
g & = & v^{-1}_{A,x_{\widetilde{z^{\widetilde{y}}}},y_{\widetilde{z}}}
v^{-1}_{A\cdot y_{\widetilde{z}},x,z^{\widetilde{y}}} a\\
 & = & v^{-1}_{A,x_{\widetilde{y}},z}v^{-1}_{A\cdot z, x, y}a\\
& & \\
h & = & v_{A,x_{\widetilde{z^{\widetilde{y}}}},y_{\widetilde{z}}}v^{-1}_{A,y,z}b \\
 & = & v^{-1}_{A\cdot (x_{\widetilde{y}})_{\widetilde{z}},y^{\widetilde{x}},z^{\widetilde{x_{\widetilde{y}}}}}v_{A\cdot z, x,y}b \\
& & \\
i & = & v_{A,x_{\widetilde{z^{\widetilde{y}}}},y_{\widetilde{z}}}v_{A,y,z}c \\
 & = & v^{-1}_{A\cdot (x_{\widetilde{y}})_{\widetilde{z}},y^{\widetilde{x}},z^{\widetilde{x_{\widetilde{y}}}}}v_{A,x,y} c\\
\end{array}\]
Repeating for the other framed virtual Reidemeister moves yields the
relations in definition \ref{def:vba}.

\begin{definition}
\textup{A \textit{virtual shadow module} or $\mathbb{Z}[X,S]$-module is a
representation of $\mathbb{Z}[X,S]$, i.e., an
abelian group $G$ with a family of automorphisms
$v_{A,x,y},t_{A,x,y},r_{A,x,y}:G\to G$ and endomorphisms $s_{A,x,y}:G\to G$
for $A\in S, x,y\in X$ such that each generator of the ideal $I$ in definition
\ref{def:vba} is the zero map.}
\end{definition}

\begin{example}\label{ex:R}\textup{
Let $G$ be a commutative ring. For any $X$-shadow $S=\{A_1,A_2,\dots,A_m\}$ we 
can give
$G$ the structure of a $\mathbb{Z}[X,S]$-module by choosing invertible elements
$v_{A,x,y},t_{A,x,y},r_{A,x,y}\in G^{\ast}$ and elements $s_{A,x,y}\in G$ for 
each $A\in S$ and $x,y\in X$ such that the ideal $I\subset G$ in definition 
\ref{def:vba} is zero. We can express such a structure with an $m\times 4$ 
block matrix of $|X|\times |X|$ blocks
\[M_G=\left[\begin{array}{c|c|c|c}
V_{A_1} & T_{A_1} & S_{A_1} & R_{A_1} \\ \hline
V_{A_2} & T_{A_2} & S_{A_2} & R_{A_2} \\ \hline
\vdots & \vdots & \vdots & \vdots \\ \hline
V_{A_m} & T_{A_m} & S_{A_m} & R_{A_m} \\
\end{array}
\right]\]
where the row $j$ column $k$ entry of $T_{A_i}$ is $t_{A_i,x_j,y_k}$, etc.}
\end{example}

\begin{example}\label{ex3}
\textup{Let $X$ be a virtual birack and $S$ an $X$-shadow.
For any $X,S$-labeling $f$ of a blackboard framed oriented link diagram $L$, 
there is an $X,S$-module generated by beads associated to semiarcs in $L$ 
with relations determined by crossings, called the \textit{fundamental 
$X,S$-module of the labeling $f$ of $L$}, denoted $\mathbb{Z}[f]$. 
For instance, the $X,S$-labeling of the virtual Hopf link diagram below 
where $X,S$ are as in example \ref{ex1} has fundamental 
$\mathbb{Z}[X,S]$-module $\mathbb{Z}[f]$
with presentation matrix $M_{\mathbb{Z}[f]}$ below.
\[\raisebox{-0.8in}{\includegraphics{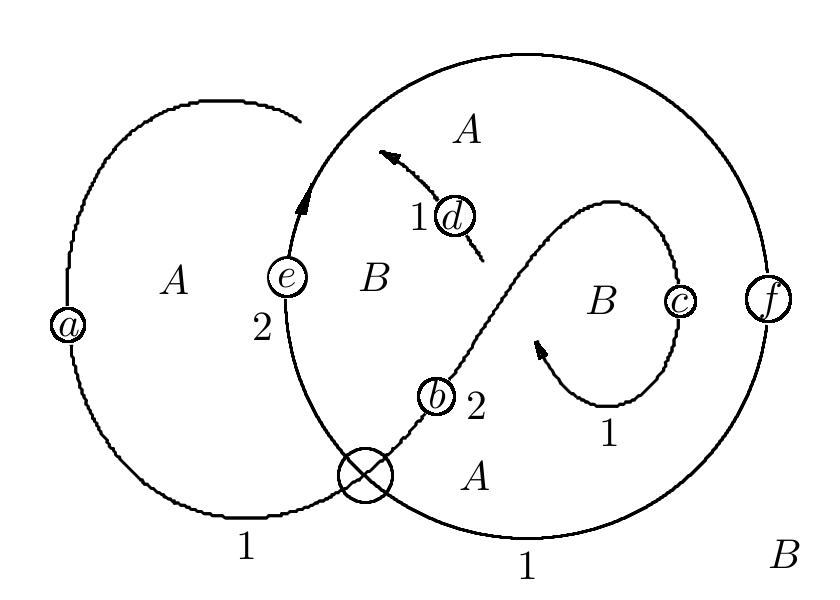}} \quad 
M_{\mathbb{Z}[f]}=
\left[\begin{array}{cccccc}
0 & s_{B,2,1} & t_{B,2,1} & -1 & 0 & 0 \\
0 & r_{B,2,1} & -1 & 0 & 0 & 0 \\
-1 & 0 & 0 & t_{A,2,1} & s_{A,2,1} & 0\\
0 & 0 & 0 & 0 & r_{A,2,1} & -1  \\
0 & 0 & 0 & 0 & -1 & v_{A,1,1}  \\
v_{A,1,1}^{-1} & -1 & 0 & 0 & 0 & 0 \\ 
\end{array}\right]
\]}\end{example}

\begin{example}
\textup{Let $X$ be the virtual birack on one element and $S$ the $X$-shadow with
one element. Then there is a unique $X,S$-labeling of any virtual link $L$, and
its fundamental $X,S$-module of virtual shadow module over the ring 
$G=\mathbb{Z}[t^{\pm 1},r^{\pm 1}]$ 
with matrix 
$M_G=\left[\begin{array}{c|c|c|c} 1 & t & 1-tr & r \end{array}\right]$
is known as the \textit{generalized Alexander module} of $L$; the determinant
of the matrix presenting $M_{\mathbb{Z}[f]}$ is a two-variable Laurent 
polynomial invariant
of virtual links known as the \textit{Generalized Alexander polynomial.} See
\cite{KR, SW} for more.
}\end{example}

\begin{definition}
\textup{Let $L$ be a virtual link of $c$ components, $X$ a virtual birack
of birack rank $N$, $S$ an $X$-shadow and $G$ an abelian group with the
structure of a $\mathbb{Z}[X,S]$-module. Let $\mathcal{L}((L,\mathbf{w}),X,S)$
be the set of $X,S$-labelings of a diagram of $L$ with writhe vector
$\mathbf{w}\in(\mathbb{Z}_N)^c$. Then the \textit{virtual shadow module
multiset} of $L$ with respect to $G$ is the multiset of $G$-modules}
\[\Phi^{M,G}_{X,S}(L)=\left\{\mathrm{Hom}_G(\mathbb{Z}[f],G) :
f\in \mathcal{L}((L,\mathbf{w}),X,S),
\mathbf{w}\in (\mathbb{Z}_N)^c\right\}\]
\textup{and the \textit{virtual shadow module polynomial} of $L$ with respect
to $G$ is}
\[\Phi^{G}_{X,S}(L)=\sum_{\mathbf{w}\in (\mathbb{Z}_N)^c}\left(
\sum_{f\in \mathcal{L}((L,\mathbf{w}),X,S)} u^{|\mathrm{Hom}_G(\mathbb{Z}[f],G)|}\right).\]
\end{definition}

By construction, we have
\begin{theorem}
If $L$ and $L'$ are virtually isotopic virtual links, then
$\Phi^{M,G}_{X,S}(L)=\Phi^{M,G}_{X,S}(L')$ and $\Phi^G_{X,S}(L)=\Phi^G_{X,S}(L')$.
\end{theorem}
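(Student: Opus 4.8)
The plan is to reduce everything to a move-by-move check, exactly paralleling the way the ideal $I$ in Definition \ref{def:vba} was assembled. Since $\Phi^{M,G}_{X,S}$ and $\Phi^{G}_{X,S}$ are each summed over a complete period of framings in $(\mathbb{Z}_N)^c$, it suffices to show that the underlying multiset of $G$-modules is unchanged under (a) the blackboard framed virtual Reidemeister moves and (b) the $N$-phone cord move; together with the fact (recalled in the text) that these generate unframed virtual isotopy once one passes to the $(\mathbb{Z}_N)^c$-tile of framings, this gives $\Phi^{M,G}_{X,S}(L)=\Phi^{M,G}_{X,S}(L')$. The statement for $\Phi^{G}_{X,S}$ is then immediate, because $\Phi^{G}_{X,S}(L)$ is a function of the multiset $\Phi^{M,G}_{X,S}(L)$ — in fact only of the multiset of cardinalities $|\mathrm{Hom}_G(\mathbb{Z}[f],G)|$ occurring in it.

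First I would fix one of the moves and take $L,L'$ to be diagrams agreeing outside a disk $D$ in which the move is performed. By the earlier theorem that framed virtual Reidemeister moves induce a bijection on $X$-labelings (and its shadow version, established above for $X,S$-labelings), there is a bijection $\Psi$ between the $X,S$-labelings of $L$ and of $L'$ that is the identity on semiarcs and regions outside $D$; this bijection is compatible with the writhe-vector bookkeeping, since a framed type I move or $N$-phone cord move on a component only shifts that component's framing, and summing over $(\mathbb{Z}_N)^c$ absorbs the shift. It then remains to show that for corresponding labelings $f$ and $f'=\Psi(f)$ the fundamental modules satisfy $\mathbb{Z}[f]\cong\mathbb{Z}[f']$ as $\mathbb{Z}[X,S]$-modules, whence $\mathrm{Hom}_G(\mathbb{Z}[f],G)\cong\mathrm{Hom}_G(\mathbb{Z}[f'],G)$ for every $G$, and the multisets (and hence the polynomials) coincide.

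The core of the argument is this module isomorphism. Both $\mathbb{Z}[f]$ and $\mathbb{Z}[f']$ are presented by beads on semiarcs with relations read off from crossings via the rules $c=t_{A,x,y}b+s_{A,x,y}a$, $d=r_{A,x,y}b$ at classical crossings and $c=v_{A,x,y}b$, $d=v_{A,x,y}^{-1}a$ at virtual crossings. Outside $D$ the generators and relations are literally the same; inside $D$, the beads on the semiarcs created or destroyed by the move can be solved for in terms of the beads on $\partial D$ using the bead relations, and the claim that the two presentations define isomorphic modules is precisely the assertion that the interior relations on the two sides become equal after this substitution. That assertion is, up to rewriting, the statement that each generator of $I$ acts as the zero map on any $\mathbb{Z}[X,S]$-module: the Reidemeister/virtual type III and the $v$ move contribute the $v$--$v$, $v$--$r$, $v$--$t$, $v$--$s$, $r$--$r$, $t$--$r$, $s$--$r$, $t$--$t$, $t$--$s$ and mixed $s$-relations (the displayed vIII computation in the text being a representative instance), the framed type I and vI moves contribute the remaining relation and the index identifications, and the $N$-phone cord move contributes the final generator $1-\prod_{k=0}^{N-1}(t_{\cdots}r_{\cdots}+s_{\cdots})$. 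Matching the substituted interior relations against this list is routine linear bookkeeping once the relevant tangles are drawn.

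The main obstacle is organizational rather than conceptual: there are many moves, and the index conventions ($A\cdot y_x$, $x^{\widetilde y}$, the maps $\pi$ and $\alpha$, the reversed $i,j$ order in the matrices) must be handled with care so that the substituted interior relations land exactly on the listed generators of $I$ and not on some re-indexed variant. The most delicate single case is the $N$-phone cord move, where one must follow a bead through $N$ successive positive kinks and watch the telescoping product of $t r+s$ coefficients emerge; this is exactly the point at which finiteness of the birack rank $N$ and the sum over $(\mathbb{Z}_N)^c$ are used, and it matches the last generator of $I$. No genuinely new idea is needed beyond those already encoded in Definition \ref{def:vba}.
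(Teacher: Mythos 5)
Your proposal is correct and is essentially an expanded version of the paper's own (very terse) argument: the paper simply asserts the theorem ``by construction,'' meaning precisely that the generators of the ideal $I$ were chosen so that the bead presentations before and after each framed virtual Reidemeister move and the $N$-phone cord move define isomorphic $\mathbb{Z}[X,S]$-modules, with the sum over the $(\mathbb{Z}_N)^c$ tile of framings handling the passage to unframed isotopy. Your move-by-move reduction, the labeling bijection, the matching of substituted interior relations against the generators of $I$, and the observation that $\Phi^{G}_{X,S}$ is a function of the multiset $\Phi^{M,G}_{X,S}$ all faithfully fill in what the paper leaves implicit.
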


\begin{example}\label{ex4}\textup{
Let $X,S$ be the virtual birack and shadow from example \ref{ex2}. Let 
$G=\mathbb{Z}_5$; then thinking of $G$ as a commutative ring as in 
example \ref{ex:R}, we can give $G$ the structure of a 
$\mathbb{Z}[X,S]$-module with the shadow module matrix
\[M_G=\left[\begin{array}{cc|cc|cc|cc}
2 & 2 & 1 & 4 & 3 & 4 & 1 & 1 \\ 
2 & 2 & 1 & 4 & 1 & 3 & 1 & 1 \\ \hline
1 & 1 & 2 & 3 & 3 & 1 & 2 & 2 \\
1 & 1 & 2 & 3 & 4 & 3 & 2 & 2
\end{array}\right].\]
To find the bead labelings of the $X,S$-labeling of the virtual Hopf link from 
example \ref{ex3}, we replace the entries in the matrix $M_{\mathbb{Z}[f]}$ 
presenting the
fundamental $\mathbb{Z}[X,S]$-module of $f$ with their values in $M_G$ to 
obtain a matrix over $\mathbb{Z}_5$ whose solution space yields the set of 
bead labelings of the semiarcs with beads in $\mathbb{Z}_5$:
\[\left[\begin{array}{cccccc}
0 & 4 & 2 & 4 & 0 & 0 \\
0 & 2 & 4 & 0 & 0 & 0 \\
4 & 0 & 0 & 1 & 1 & 0 \\
0 & 0 & 0 & 0 & 1 & 4 \\
0 & 0 & 0 & 0 & 4 & 2 \\
3 & 4 & 0 & 0 & 0 & 0 \\
\end{array}\right]
\longrightarrow
\left[\begin{array}{cccccc}
1 & 0 & 0 & 0 & 0 & 0 \\
0 & 1 & 0 & 0 & 0 & 0 \\
0 & 0 & 1 & 0 & 0 & 0 \\
0 & 0 & 0 & 1 & 0 & 0 \\
0 & 0 & 0 & 0 & 1 & 0 \\
0 & 0 & 0 & 0 & 0 & 1 \\
\end{array}\right]
\]
Thus, $X,S$-labeling in example \ref{ex2} has only the all-zeroes bead
labeling with respect to our chosen $M_R$, and this labeling contributes
$u^1$ to the value of $\Phi_{X,S}^G(L)$. Repeating for all eight shadow
labelings, we get $\Phi_{X,S}^G(L)=8u$. 
}\end{example}

\begin{example}
\textup{Repeating the computation from example \ref{ex4} for the unlink of
two components and the classical Hopf link with the same values of $X,S$ and 
$M_G$, we obtain
\[\begin{array}{ccc}
\includegraphics{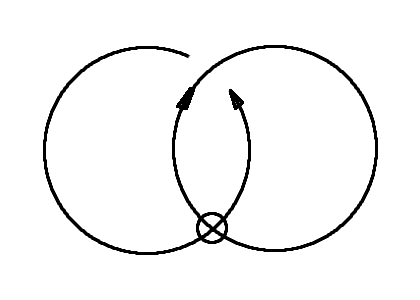} & 
\includegraphics{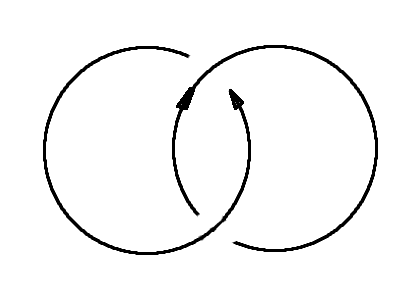} & 
\includegraphics{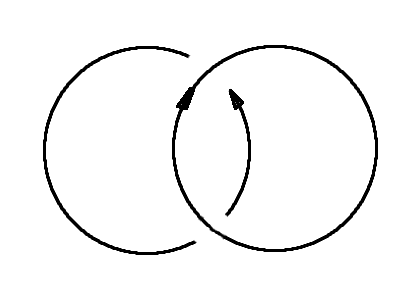} \\
\Phi_{X,S}^G(VH)=8u & \Phi_{X,S}^G(L2a1)=8u^5 & \Phi_{X,S}^G(U_2)=8u^{25} \\
\end{array}\]
In particular, this example shows that $\Phi_{X,S}^G$ is not determined by
$\Phi_{X,S}^{\mathbb{Z}}$, since all three links have $\Phi_{X,S}^{\mathbb{Z}}=8.$
}\end{example}

\section{\large\textbf{Twisted Virtual Shadow Algebras and Modules}}\label{tvbs}

\subsection{Twisted virtual knots and links}

Twisted virtual knots and links were introduced in \cite{B} in 2008, 
generalizing the notion of virtual knots as knots in $I$-bundles over 
compact orientable surfaces to the case of compact nonorientable surfaces.
Classical crossings in a twisted virtual link diagram correspond to
crossings in $\Sigma\times I$, while virtual crossings correspond to
genus in $\Sigma\times I$ and \textit{twist bars} on strands indicate
when a strand has traversed a cross-cap in $\Sigma$.

\[\raisebox{0.2in}{\includegraphics{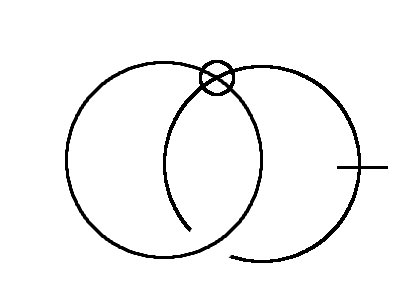}}\quad 
\includegraphics{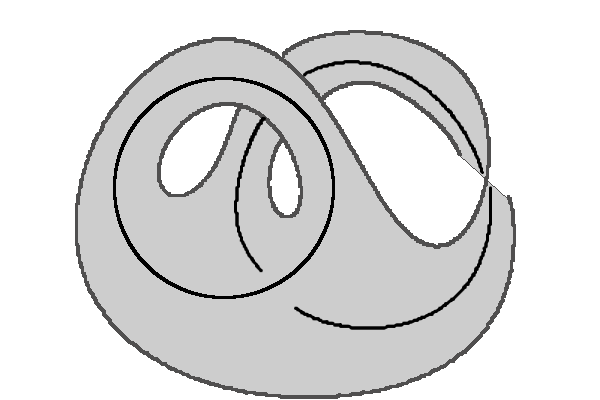}\]

A twisted oriented blackboard framed virtual link is an equivalence class of 
twisted oriented virtual link diagrams, i.e. oriented virtual link diagrams with
twist bars, under the equivalence relation generated by the virtual Reidemeister
moves together with the \textit{twisted moves}:
\[\includegraphics{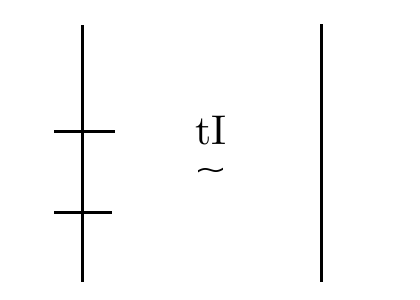}\quad\includegraphics{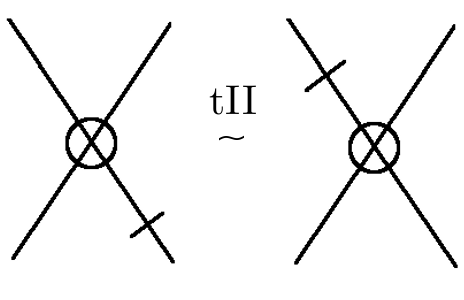}
\quad \includegraphics{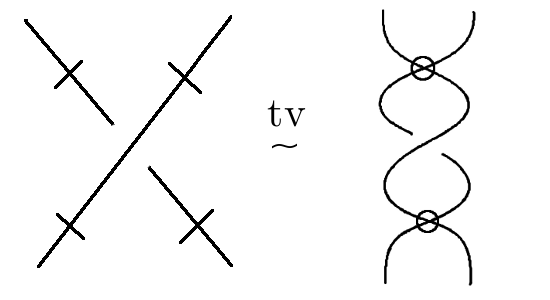}\]

\begin{remark}\textup{
Note that while a twist bar can always move past a virtual crossing, in
general it cannot move past a classical crossing. A twist bar \textit{can} 
be pushed through a classical kink, so the blackboard framing given by 
writhe numbers for each component is still well defined in the twisted 
virtual setting. See \cite{B,CN6} for more.
}\end{remark}

\subsection{Twisted virtual biracks}

Adding twists to our set of allowed moves adds a map $T:X\to X$ to our algebraic
structure defined by semiarc labelings. Specifically, in a twisted virtual
link diagram, semiarcs in a twisted virtual link diagram are obtained by 
dividing the link at classical over and undercrossings, virtual crossings 
and twist bars. As before, we will refer to a portion of a twisted virtual 
link diagram between classical crossing points as a 
\textit{classical semiarc}. Our semiarc labeling rule now includes:
\[\includegraphics{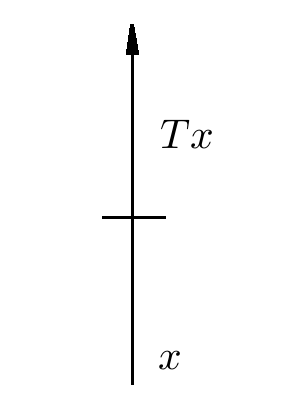}\]

\begin{definition}
\textup{Let $X$ be a set and let $\Delta : X \rightarrow X \times X$ be the 
diagonal map $\Delta(x) = (x,x)$. A \textit{twisted virtual birack} structure
on $X$ is a pair of invertible maps $B,V:X\times X \rightarrow X\times X$ and 
an involution $T : X\rightarrow X$ satisfying
\begin{itemize}
\item[(i)] $X$ is a virtual birack with operations $B$ and $V$,
\item[(ii)]
\[(T\times \mathrm{Id}_X)V = V(\mathrm{Id}_X\times T)\quad \mathrm{and} \quad 
(\mathrm{Id}_X\times T)V=V(T\times \mathrm{Id}_X), \quad \mathrm{and}\]
\item[(iii)]
\[(T\times T)B(T\times T) = VBV.\]
\end{itemize}
If we also have $(S\circ \Delta)_1 = (S\circ \Delta)_2$, $X$ is a 
\textit{twisted virtual biquandle}. See \cite{CN6,SPC} for more.}
\end{definition} 

\begin{example}\textup{
Let $\bar{\Lambda}=\mathbb{Z}[t^{\pm 1}, r^{\pm 1}, v^{\pm 1}, T]/(1-T^2,t-v^2r)$.
Then any $\bar{\Lambda}$-module is a twisted virtual birack under the 
operations $B(x,y)=(ty,rx)$, $V(x,y)=(vy,v^{-1}x)$, $T(x)=Tx$. See \cite{CN6}
for more.
}\end{example}

As with virtual biracks, we can represent a twisted virtual birack structure
on a set $X=\{x_1,\dots, x_n\}$ with a matrix (in this case, $n\times (4n+1)$) 
encoding the operation tables of the $x^y, x_y, x^{\widetilde{y}},
x_{\widetilde{y}}$ and $T(x)$ operations:
\[M_X=\left[\begin{array}{c|c|c|c|c}
B_1 & B_2 & V_1 & V_2 & T \\
\end{array}\right]\]
with $(B_1)_{j,i}=k$, $(B_2)_{i,j}=l$, $(V_1)_{j,i}=m$, $(V_2)_{i,j}=p$ and $T_i=q$
such that $B(x_i,x_j)=(x_k,x_l)$, $V(x_i,x_j)=(x_m,x_p)$ and $T(x_i)=x_q$.

A twisted virtual birack is a virtual birack; more precisely, there is
a forgetful functor from the category of twisted virtual biracks to the
category of virtual biracks defined by forgetting the twist operation, i.e.,
\[\mathcal{F}:\mathbf{Tvb}\to \mathbf{Vb} \quad \mathrm{by} \quad
\mathcal{F}(X,B,V,T)=(X,B,V).\] 
However, not every virtual birack has a compatible twisted structure 
$T:X\to X$: 

\begin{theorem}
The virtual birack $X$ with virtual birack matrix
\[M_X=\left[\begin{array}{cc|cc|cc|cc}
1 & 1 & 2 & 2 & 2 & 2 & 2 & 2 \\
2 & 2 & 1 & 1 & 1 & 1 & 1 & 1 \\
\end{array}\right]\]
has no twisted structure $T:X\to X$ satisfying all of the twisted
virtual birack axioms.\end{theorem}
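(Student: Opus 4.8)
The plan is to make everything explicit: read the maps $B$ and $V$ off the matrix $M_X$, observe that the two-element set $X$ has only two involutions, and check that the twist-compatibility axiom (iii), $(T\times T)B(T\times T)=VBV$, fails for both of them. Since axiom (i) does not involve $T$ (and holds because $X$ is a virtual birack), showing that (iii) is unsatisfiable already proves that no twist operation $T$ completes $X$ to a twisted virtual birack.

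First I would translate $M_X$ into formulas. Writing $\tau=(1\,2)$ for the transposition of $X=\{1,2\}$ — which is also the kink map $\pi$, since the birack rank is $N=2$ — the matrix conventions give $B(x,y)=(y,\tau(x))$ and $V(x,y)=(\tau(y),\tau(x))$. Viewed as a permutation of the four-element set $X\times X$, the map $B$ is the $4$-cycle $(1,1)\mapsto(1,2)\mapsto(2,2)\mapsto(2,1)\mapsto(1,1)$, and a one-line computation yields $B^2=\tau\times\tau$.

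Next I would dispose of axiom (iii). Any involution $T:X\to X$ is either $\mathrm{Id}_X$ or $\tau$, so $T\times T$ is either $\mathrm{Id}_{X\times X}$ or $\tau\times\tau=B^2$; in either case $T\times T$ commutes with $B$, hence $(T\times T)\,B\,(T\times T)=B$ for both candidate values of $T$. Thus axiom (iii) can hold for some involution $T$ only if $B=VBV$. But computing the right-hand side, $V(x,y)=(\tau(y),\tau(x))$, then $B(\tau(y),\tau(x))=(\tau(x),y)$, then $V(\tau(x),y)=(\tau(y),x)$, so $VBV(x,y)=(\tau(y),x)$. Evaluating at $(1,1)$ gives $VBV(1,1)=(2,1)\neq(1,2)=B(1,1)$, so $B\neq VBV$. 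Therefore axiom (iii) fails for every involution $T$, and $X$ carries no twisted virtual birack structure.

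I do not expect a genuine obstacle: the whole argument is a finite verification. The only thing to be careful about is faithfully converting $M_X$ into the maps $B$ and $V$ — in particular keeping track of the transposed indexing in the $B_1$ and $V_1$ blocks — and recording the elementary fact that a two-element set has exactly two involutions, which is precisely what makes a check on both of them a complete argument. One can note in passing that axioms (i) and (ii) are in fact satisfied by both candidate maps $T$; it is solely axiom (iii) that is violated, which is why the failure is so clean.
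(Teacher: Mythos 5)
Your proof is correct and follows essentially the same route as the paper: enumerate the only two involutions on a two-element set and show that axiom (iii), $(T\times T)B(T\times T)=VBV$, fails for both, since the left side is always $(y,\tau(x))=B(x,y)$ while $VBV(x,y)=(\tau(y),x)$. Your observation that $\tau\times\tau=B^2$ commutes with $B$ is a mildly slicker way to see that the left side equals $B$ in both cases, but it is the same computation the paper carries out directly.
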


\begin{proof}
A twisted structure $T:X\to X$ is a transposition, so there are only
two possible twist maps: $T=\mathrm{Id}_X$ and $T=(12)$. Then 
for both $T=\mathrm{Id}_X$ and $T=(12)$, we have 
\[(T\times T)B(T\times T)(x,y)=(y,(12)x)\ne ((12)y,x)=VBV(x,y).\]
\end{proof}


\subsection{Twisted virtual birack shadows}

\begin{definition}\textup{
Let $X$ be a twisted virtual birack. A \textit{twisted virtual birack shadow}
is a virtual birack shadow $S$ over $X$ considered as a virtual birack 
such that for every $x\in X$ we have $A\cdot Tx=A\cdot x$.
\[\includegraphics{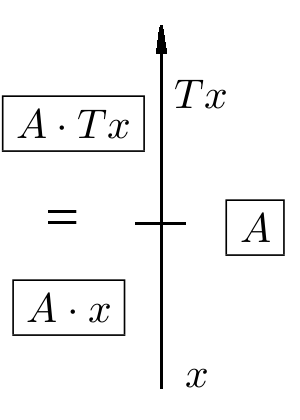}\]
}\end{definition}

As before, for any twisted virtual birack $X$ of rack rank $N$ and $X$-shadow
$S$, we have an invariant of twisted virtual links
\[\Phi^{\mathbb{Z}}_{X,S}(L)= 
\sum_{w \in (\mathbb{Z}_n)}|\mathcal{L}((L,\mathbf{w}),X,S)|\]
where $\mathcal{L}((L,\mathbf{w}),X,S)$ is the set of $X,S$-labelings of a
diagram of $L$ with writhe vector $\mathbf{w}$. Moreover, we have

\begin{theorem} Let $X$ be a twisted virtual birack and $S$ an $X$-shadow. 
Then for any unframed oriented twisted virtual link $L$,
$\Phi^{\mathbb{Z}}_{X,S}(L)= |S| \Phi^{\mathbb{Z}}_{X}(L).$
\end{theorem}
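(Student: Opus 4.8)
The plan is to reduce this statement to the corresponding fact already established in the purely virtual (untwisted) setting, namely the theorem stating $\Phi^{\mathbb{Z}}_{X,S}(L)=|S|\,\Phi^{\mathbb{Z}}_X(L)$ for virtual biracks, together with the observation preceding that theorem: for a fixed $X$-labeling of a diagram, a choice of shadow label on any one region propagates uniquely to all other regions, and each of the $|S|$ choices of that ``source'' label yields a valid $X,S$-labeling. First I would fix a diagram $(L,\mathbf{w})$ of the twisted virtual link $L$ with writhe vector $\mathbf{w}\in(\mathbb{Z}_N)^c$, and fix an arbitrary $X$-labeling $f$ of its semiarcs. The key claim is that the number of ways to extend $f$ to a full $X,S$-labeling is exactly $|S|$, just as in the untwisted case.

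The main point to check — and the place where the twisted hypothesis genuinely enters — is that propagation of a shadow label across the new diagrammatic feature, the twist bar, is well defined and does not obstruct the count. The relevant region-labeling condition at a twist bar is governed by the axiom $A\cdot Tx = A\cdot x$ in the definition of a twisted virtual birack shadow: crossing the strand (carrying semiarc label $x$ before the bar and $Tx$ after) changes the shadow label on one side according to the $X$-action, and the twisted shadow axiom guarantees this change is consistent whichever of the two semiarc labels one uses. So a shadow label known on one side of a twist bar determines the shadow label on the other side uniquely, exactly as happens at classical and virtual crossings. Since the underlying graph of regions of a connected diagram is connected (every region is reachable from any fixed source region by a sequence of strand-crossings, now possibly including passages across twist bars), a choice of label for the source region determines the labels everywhere, and all $|S|$ such choices give valid $X,S$-labelings because every local condition — at classical crossings, at virtual crossings, and now at twist bars — is a propagation rule of this form.

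Therefore $|\mathcal{L}((L,\mathbf{w}),X,S)| = |S|\cdot|\mathcal{L}((L,\mathbf{w}),X)|$ for every writhe vector $\mathbf{w}\in(\mathbb{Z}_N)^c$, where $\mathcal{L}((L,\mathbf{w}),X)$ denotes the set of $X$-labelings of the diagram (whose cardinality is $\Phi^B_X(L,\mathbf{w})$). Summing over $\mathbf{w}\in(\mathbb{Z}_N)^c$ and using the definitions of $\Phi^{\mathbb{Z}}_{X,S}$ and $\Phi^{\mathbb{Z}}_X$ gives
\[
\Phi^{\mathbb{Z}}_{X,S}(L)=\sum_{\mathbf{w}}|S|\,\Phi^B_X(L,\mathbf{w})
=|S|\sum_{\mathbf{w}}\Phi^B_X(L,\mathbf{w})=|S|\,\Phi^{\mathbb{Z}}_X(L),
\]
which is the asserted identity. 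I expect the only real obstacle to be the bookkeeping verifying that the twisted shadow axiom is precisely the condition needed for consistent propagation across a twist bar and across the twisted moves; the rest is a direct transcription of the untwisted argument, and the invariance of both sides under the twisted moves is inherited from the already-established invariance of $\Phi^{\mathbb{Z}}_{X,S}$ and $\Phi^{\mathbb{Z}}_X$ for twisted virtual links.
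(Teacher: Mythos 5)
Your proposal is correct and takes essentially the same route as the paper: the paper proves the untwisted version by observing that a shadow label on one ``source'' region propagates to determine all other region labels, giving exactly $|S|$ shadow labelings per $X$-labeling per framing, and then asserts the twisted case ``as before.'' You supply the same argument, with the additional (welcome) verification that the axiom $A\cdot Tx=A\cdot x$ is exactly what makes propagation across a twist bar consistent.
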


\begin{corollary} 
Let $X$ be a virtual birack and $S$ an $X$-shadow. If $L$ and $L'$ 
are virtually isotopic virtual links, then we have
$\Phi^{\mathbb{Z}}_{X,S}(L)= \Phi^{\mathbb{Z}}_{X,S} (L').$
\end{corollary}

\subsection{Twisted virtual shadow algebras}

As in the virtual birack case, we use a secondary labeling by beads 
to enhance $\Phi^{\mathbb{Z}}_{X,S}(L)$.

\begin{definition}\label{def:tvbsalg}
\textup{Let $X$ be a set with twisted virtual birack structure 
$B(x,y)=(y^x,x_y)$, $V(x,y)=(y^{\widetilde{x}},x_{\widetilde{y}})$ and $T:X\to X$
and let $S$ be an $X$-shadow. Then the \textit{twisted virtual shadow algebra} 
$\mathbb{Z}[X,S]$ is the quotient of the polynomial algebra
$\mathbb{Z}[v_{A,x,y}^{\pm 1},t_{A,x,y}^{\pm 1},r_{A,x,y}^{\pm 1}, q_{A,x}]$
for $A\in S, x,y\in X$ modulo the ideal $I$ generated by elements of the form}
\[\begin{array}{llll}
\bullet & v_{A,y_{\widetilde{x}},x^{\widetilde{y}}}-v_{A,x,y} & \quad 
\bullet & v_{A\cdot y_{\widetilde{z}},x,z^{\widetilde{y}}}v_{A,y,z}-v_{A\cdot(x_{\widetilde{y}})_{\widetilde{z}},y^{\widetilde{x}},z^{\widetilde{x_{\widetilde{y}}}}}v_{A,x,y} \\
\bullet & v_{A\cdot(x_{\widetilde{y}})_{\widetilde{z}},y^{\widetilde{x}},z^{\widetilde{x_{\widetilde{y}}}}}v_{A,x_{\widetilde{z^{\widetilde{y}}}},y_{\widetilde{z}}}-v_{A\cdot z,x,y}v_{A,y,z} & \quad 
\bullet & v_{A\cdot z,x,y}v_{A,x_{\widetilde{y}},z}-v_{A\cdot y_{\widetilde{z}},x,z^{\widetilde{y}}} v_{A,x_{\widetilde{z^{\widetilde{y}}}},y_{\widetilde{z}}} \\
\bullet & v_{A\cdot y_{z},x,z^{y}}v_{A,x_{\widetilde{z^y}},y_z}-v_{A\cdot z,x,y},v_{A,x_{\widetilde{y}},z} & \quad 
\bullet & v_{A,x_{\widetilde{z^y}},y_z}r_{A,y,z}-r_{A\cdot(x_{\widetilde{y}})_{\widetilde{z}},y^{\widetilde{x}},z^{\widetilde{x_{\widetilde{y}}}}}v_{A\cdot z,x,y} \\
\bullet & v_{A\cdot y_{z},x,z^{y}}t_{A,y,z}-t_{A\cdot(x_{\widetilde{y}})_{\widetilde{z}},y^{\widetilde{x}},z^{\widetilde{x_{\widetilde{y}}}}}v_{A,x_{\widetilde{y}},z} & \quad   
\bullet & r_{A,x_{z^y},y_z}r_{A\cdot y_z,x,z^y}- r_{A,x_y,z}r_{A\cdot z,x,y}\\ 
\bullet & t_{A,x_{z^y},y_z}r_{A,y,z}- r_{A\cdot x_{yz},y^x,z^{x_y}}t_{A\cdot z,x,y} & \quad
\bullet & t_{A\cdot y_z,x,z^y}t_{A,y,z}-t_{A\cdot x_{yz},y^x,z^{x_y}}t_{A,x_y,z} \\
\bullet & 1-q_{A,Tx}q_{A,x} & \quad 
\bullet & v_{A,x,Ty}-v_{A,x,y}  \\
\bullet & v_{A,Tx,y}-v_{A,x,y} & \quad
\bullet & q_{A\cdot y,x}v_{A,x,y}-v_{A,Tx,y}q_{A,x_{\widetilde{y}}} \\
\bullet & v_{A,x,Ty} q_{A,y} - q_{A\cdot x_{\widetilde{y}},y^{\widetilde{x}}} v_{A,x,y} 
& \quad 
\bullet & v_{A,(x_{\widetilde{y}})^{(y_{\widetilde{x}})}}q_{A,(Tx)_{(Ty)}} r_{A,T_x,T_y} q_{A\cdot y,x}v_{A,x,y}- t_{A,y^{\widetilde{x}},x_{\widetilde{y}}}  \\
\bullet & \multicolumn{3}{l}{q_{A\cdot (Tx)_{(Ty)},(Ty)^{(Tx)}}t_{A,Tx,Ty}q_{A,y}
-v_{A,(x_{\widetilde{y}})^{(y_{\widetilde{x}})}}r_{A,y^{\widetilde{x}},x_{\widetilde{y}}}v_{A,x,y} 
\quad \mathrm{and}}   \\
\bullet & \multicolumn{3}{l}{1-\displaystyle{\prod_{k=0}^{N-1}
t_{A\cdot^{-1}\alpha(\pi^{k}(x)),\pi^k(x),\alpha(\pi^k(x))}
r_{A\cdot^{-1}\alpha(\pi^{k}(x)),\pi^k(x),\alpha(\pi^k(x))}}} \\
\end{array}\]
\end{definition}

As in the virtual birack case, the twisted virtual shadow algebra relations
come from the oriented framed twisted virtual Reidemeister moves where
we interpret the generators $v_{A,x,y}, t_{A,x,y}, r_{A,x,y}$ and $q_{A,x}$ as
coefficients for beads on semiarcs as pictured: 

\[\raisebox{-0.5in}{\includegraphics{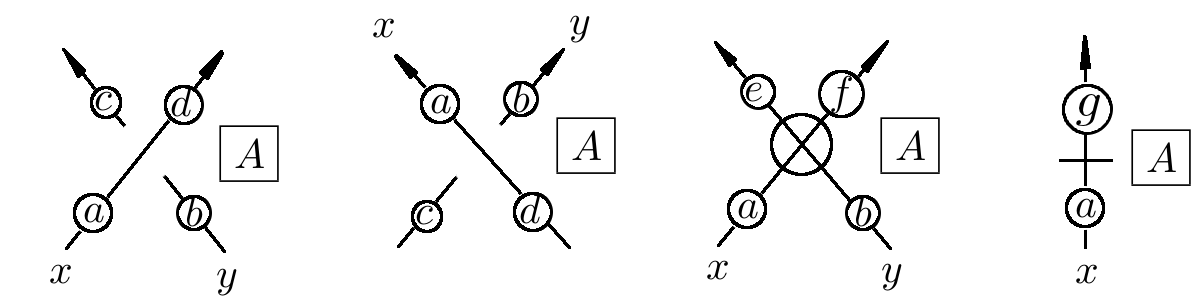} }\quad
\begin{array}{rcl}
c & = & t_{A,x,y}b \\
d & = & r_{A,x,y}a \\
e & = & v_{A,x,y}b \\
f & = & v_{A,x,y}^{-1} a \\
g & = & q_{A,x} a\\
\end{array}\]

Then for instance, move tv requires that
\[\includegraphics{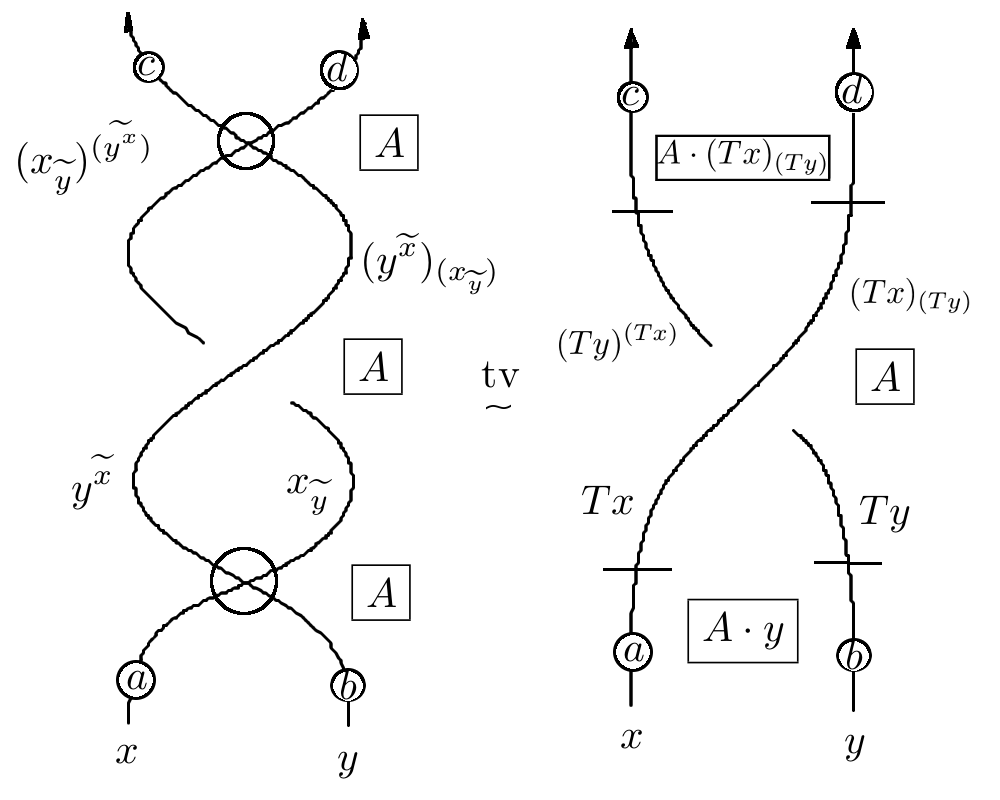} \quad
\raisebox{1.4in}{$\begin{array}{rcl}
c & = & v_{A,(x_{\widetilde{y}})^{(y_{\widetilde{x}})}}r_{A,y^{\widetilde{x}},x_{\widetilde{y}}}v_{A,x,y}b \\
& = &q_{A\cdot (Tx)_{(Ty)},(Ty)^{(Tx)}}t_{A,Tx,Ty}q_{A,y}b \\
d  & = & v_{A,(x_{\widetilde{y}})^{(y_{\widetilde{x}})}}^{-1}t_{A,y^{\widetilde{x}},x_{\widetilde{y}}}v_{A,x,y}^{-1} a\\
& = & q_{A,(Tx)_{(Ty)}} r_{A,T_x,T_y} q_{A\cdot y,x}a \\
\end{array}$}\]

\begin{remark}
\textup{If $s_{A,x,y}$ coefficients are included in the bead operations at
classical crossings as in the virtual birack algebra case, the 
tv requires that $s_{A,x,y}=0$ for all $A\in S, x,y\in X$.}
\end{remark}

\begin{definition}
\textup{Let $X$ be a twisted virtual birack and $S$ an $X$-shadow. A
\textit{twisted virtual birack shadow module} or $\mathbb{Z}[X,S]$-module
is a representation of $\mathbb{Z}[X,S]$, i.e., an abelian group $G$
with automorphisms $v_{A,x,y}, t_{A,x,y}, r_{A,x,y}, q_{A,x}$ for $A\in S$,
$x,y\in X$ such that the maps generating the ideal $I$ in definition
\ref{def:tvbsalg} are zero.}
\end{definition}

As before, an $X,S$-labeling $f$ of an oriented twisted virtual link diagram
defines a fundamental $\mathbb{Z}[X,S]$-module $\mathbb{Z}[f]$ with 
presentation matrix $M_f$ expressing the system of linear equations determined
beads on the semiarcs.

\begin{definition}
\textup{Let $L$ be a virtual link of $c$ components, $X$ a twisted virtual 
birack of birack rank $N$, $S$ an $X$-shadow and $G$ an abelian group with the
structure of a $\mathbb{Z}[X,S]$-module. Let $\mathcal{L}((L,\mathbf{w}),X,S)$
be the set of $X,S$-labelings of a diagram of $L$ with writhe vector
$\mathbf{w}\in(\mathbb{Z}_N)^c$. Then the \textit{twisted virtual shadow module
multiset} of $L$ with respect to $G$ is the multiset of $G$-modules}
\[\Phi^{M,G}_{X,S}(L)=\left\{\mathrm{Hom}_G(\mathbb{Z}[f],G) :
f\in \mathcal{L}((L,\mathbf{w}),X,S),
\mathbf{w}\in (\mathbb{Z}_N)^c\right\}\]
\textup{and the \textit{twisted virtual shadow module polynomial} of $L$ with 
respect to $G$ is}
\[\Phi^{G}_{X,S}(L)=\sum_{\mathbf{w}\in (\mathbb{Z}_N)^c}\left(
\sum_{f\in \mathcal{L}((L,\mathbf{w}),X,S)} u^{|\mathrm{Hom}_G(\mathbb{Z}[f],G)|}\right).\]
\end{definition}

By construction, we have
\begin{theorem}
If $L$ and $L'$ are twisted virtually isotopic oriented twisted virtual links, 
then
$\Phi^{M,G}_{X,S}(L)=\Phi^{M,G}_{X,S}(L')$ and $\Phi^G_{X,S}(L)=\Phi^G_{X,S}(L')$.
\end{theorem}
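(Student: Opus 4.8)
The plan is to show that both $\Phi^{M,G}_{X,S}$ and $\Phi^G_{X,S}$ are unchanged by each of the moves generating twisted virtual isotopy of unframed links: the virtual Reidemeister moves, the twisted moves, and---to bridge the gap between the blackboard-framed setting implicit in the fundamental module and the unframed setting---the framed Reidemeister I move together with the $N$-phone cord move (which replace the classical Reidemeister I move). Since $\Phi^G_{X,S}$ is obtained from the multiset $\Phi^{M,G}_{X,S}$ by applying the fixed function $M\mapsto u^{|\mathrm{Hom}_G(M,G)|}$ and summing, it suffices to prove invariance of the multiset $\Phi^{M,G}_{X,S}$.

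Fix a move relating diagrams $D$ and $D'$ of a framed diagram of $L$. First I would record a bijection between the $X,S$-labelings of $D$ and those of $D'$; this is the same bijection underlying the invariance of $\Phi^{\mathbb{Z}}_{X,S}$ in the earlier corollary (coming from the labeling-bijection theorem and the shadow axioms, with the twisted-shadow condition $A\cdot Tx=A\cdot x$ handling the twist bars), extended over the $N$-phone cord move exactly as for $\Phi^{\mathbb{Z}}_X$. Fix a labeling $f$ of $D$ and let $f'$ be its image. The presentation matrix $M_f$ of $\mathbb{Z}[f]$ has one bead generator per semiarc of $D$ and one relation per classical crossing, virtual crossing and twist bar, with coefficients read off from $f$ via the bead rules $c=t_{A,x,y}b$, $d=r_{A,x,y}a$, $e=v_{A,x,y}b$, $f=v_{A,x,y}^{-1}a$, $g=q_{A,x}a$ pictured after Definition~\ref{def:tvbsalg}. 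Passing from $D$ to $D'$ alters the generating set and relations, and the claim is that the two presentations are related by Tietze transformations, so that $\mathrm{Hom}_G(\mathbb{Z}[f],G)\cong\mathrm{Hom}_G(\mathbb{Z}[f'],G)$.

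The heart of the argument is that the generators of the ideal $I$ in Definition~\ref{def:tvbsalg} are exactly the conditions making these Tietze moves valid. For each framed twisted virtual Reidemeister move I would express the bead labels on all outgoing semiarcs of both sides as functions of the incoming bead labels using the rules above; equating the two expressions for each outgoing label produces precisely the listed generators of $I$. The displayed computation for move tv is the template; the remaining moves (RII, vII, RIII, vIII, v, the first two twisted moves) are handled identically, and the framed RI move together with the $N$-phone cord move yields the final product relation $1=\prod_{k}t\,r$. Because $G$ is a $\mathbb{Z}[X,S]$-module, every such generator acts as zero on $G$, so each new relation is a $G$-linear consequence of the old ones and vice versa; hence the linear systems $M_f$ and $M_{f'}$ have isomorphic solution spaces and $\mathrm{Hom}_G(\mathbb{Z}[f],G)\cong\mathrm{Hom}_G(\mathbb{Z}[f'],G)$. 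Matching labelings by the bijection and summing over a complete period of framings gives $\Phi^{M,G}_{X,S}(L)=\Phi^{M,G}_{X,S}(L')$, and applying $M\mapsto u^{|\mathrm{Hom}_G(M,G)|}$ and summing gives $\Phi^G_{X,S}(L)=\Phi^G_{X,S}(L')$.

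I expect the main obstacle to be the bookkeeping for the moves in which a twist bar interacts with a classical crossing---move tv and its variants---where one must track both the $V$-type reindexing of subscripts and the $q$-coefficients simultaneously, and check that the two ways of sliding one (or two) twist bars past a crossing produce the same composite bead coefficient; this is exactly the content of the last two non-product generators of $I$. A secondary subtlety is confirming that the $N$-phone cord move, rather than an honest framed RI, suffices to eliminate framing dependence: this requires the product relation $1=\prod_{k=0}^{N-1}t_{\cdots}r_{\cdots}$ to be independent of which semiarc in the kink is chosen as basepoint, which holds because $\pi$ cyclically permutes the relevant indices and the product runs over a full $\pi$-orbit.
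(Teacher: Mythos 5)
Your proposal is correct and follows exactly the route the paper intends: the paper offers no written proof beyond ``by construction,'' meaning precisely that the generators of the ideal $I$ in Definition~\ref{def:tvbsalg} were read off from the framed twisted virtual Reidemeister moves and the $N$-phone cord move so that the bead-labeling linear systems before and after each move have isomorphic solution spaces. Your fleshed-out argument---bijection of $X,S$-labelings, Tietze equivalence of the presentations of $\mathbb{Z}[f]$ justified by the vanishing of the ideal generators on $G$, and summation over a complete tile of framings---is the intended justification.
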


\section{\large\textbf{Examples and Applications}}\label{cex}

In this section we collect some examples and applications of the new
enhanced invariants.

\begin{example}
\textup{For our first application, let $X,S$ be the virtual birack and 
shadow from example \ref{ex2} and consider the $\mathbb{Z}[X,S]$-module 
structure on $G=\mathbb{Z}_5$ given by the virtual birack shadow module matrix
\[M_G=\left[\begin{array}{cc|cc|cc|cc}
1 & 1 & 1 & 1 & 4 & 1 & 3 & 3 \\
1 & 1 & 1 & 1 & 1 & 4 & 3 & 3 \\ \hline
2 & 2 & 2 & 2 & 4 & 1 & 4 & 4 \\
2 & 2 & 2 & 2 & 1 & 4 & 4 & 4 \\
\end{array}\right].\]
The oriented virtual knot $4.4$ has $\Phi_{X,S}^G(4.4)=4u$ while its  
reverse $\overline{4.4}$ has $\Phi_{X,S}^G(\overline{4.4})=4u^5$. In 
particular, since reversing the orientation of a virtual knot yields 
an isomorphic knot group, the invariants $\Phi_{X,S}^G$ are not determined 
by the isomorphism type of the knot group.
\[\begin{array}{cc}
\includegraphics{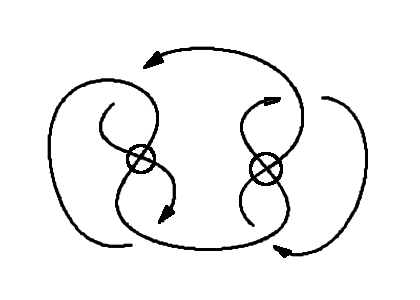} & \includegraphics{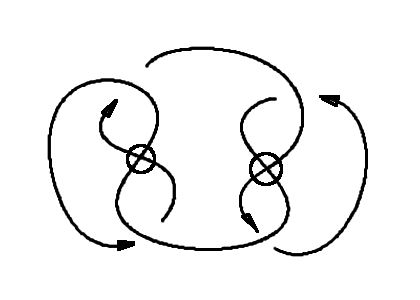} \\
\Phi_{X,S}^G(4.4)=4u & \Phi_{X,S}^G(\overline{4.4})=4u^5 \\
\end{array}
\]
}\end{example}

\begin{example}\textup{
\textit{Slavik's knot} is not detected by the arrow polynomial, and 
the \textit{Miyazawa} knot is not detected by the Miyazawa polynomial 
(see \cite{DK}). However, both are distinguished from the unknot and from each
other by $\Phi_{X,S}^G$ where $X,S$ are as in example \ref{ex3}, 
$G=\mathbb{Z}_5$ and the $\mathbb{Z}[X,S]$-module structure on $G$ is given
by the matrix:
\[M_G=\left[\begin{array}{cc|cc|cc|cc}
1 & 1 & 2 & 2 & 3 & 3 & 4 & 4 \\
1 & 1 & 2 & 2 & 3 & 3 & 4 & 4 \\ \hline
2 & 2 & 4 & 4 & 3 & 3 & 2 & 2 \\
2 & 2 & 4 & 4 & 3 & 3 & 2 & 2 \\
\end{array}\right]\]
\[\begin{array}{ccc}
\includegraphics{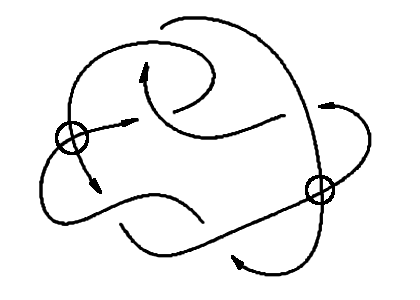} & \includegraphics{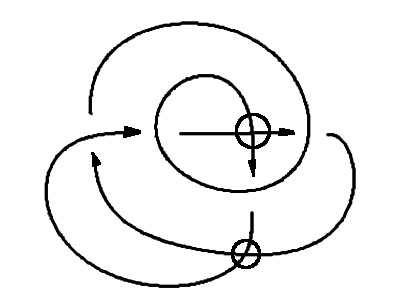} 
& \includegraphics{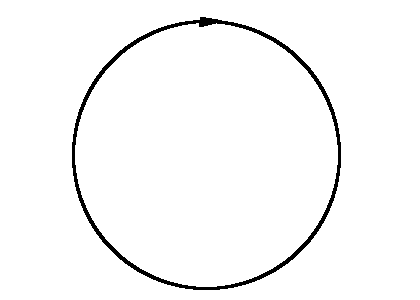} \\
\Phi_{X,S}^G(\mathrm{Slavik})=4u^{25} & \Phi_{X,S}^G(\mathrm{Miyazawa})=4u 
& \Phi_{X,S}^G(\mathrm{Unknot})=4u^5 \\
\end{array}\]
}\end{example}

\begin{example}\textup{
For our next example, we randomly a selected $\mathbb{Z}[X,S]$-module 
structure on $G=\mathbb{Z}_5$ for the virtual birack $X$ and $X$-shadow 
$S$ from example \ref{ex2} and  computed $\Phi_{X,S}^G$ for the virtual 
knots in the knot atlas \cite{KA} using our custom \texttt{python} code, 
available at \texttt{www.esotericka.org}.
\[G=\mathbb{Z}_5, \quad M_G=\left[\begin{array}{cc|cc|cc|cc}
2 & 2 & 2 & 3 & 4 & 2 & 2 & 2 \\
2 & 2 & 2 & 3 & 3 & 4 & 2 & 2 \\ \hline
4 & 4 & 1 & 4 & 4 & 3 & 1 & 1 \\
4 & 4 & 1 & 4 & 2 & 4 & 1 & 1 \\
\end{array}\right]\]
\[\begin{array}{r|l}
\Phi_{X,S}^G(L) & L \\ \hline
4u & 2.1, 3.2, 3.3, 3.4, 4.1, 4.2, 4.3, 4.4, 4.5, 4.6, 4.9, 4.11, 4.12, 4.13, 4.14,
4.15, 4.17, 4.18, 4.19, 4.20, 4.21, 4.22, \\
& 4.23, 4.24, 4.25, 4.26, 4.27, 4.28, 4.29, 4.31, 4.33, 4.34, 4.35, 4.36, 4.37, 4.38, 4.39, 4.40, 4.42, 4.43,  \\ 
& 4.44, 4.45, 4.46, 4.48, 4.49,4.51, 4.52, 4.54, 4.57, 4.60, 4.61, 4.62, 4.63, 4.64, 4.65, 4.66, 4.67, 4.69, 4.73, 4.78, \\  & 4.79, 4.80, 4.81, 4.82, 4.83, 4.84, 4.87, 4.88, 4.89, 4.92, 4.93, 4.94, 4.95, 4.97, 4.101, 4.103, 4.104 \\
4u^5 & 3.1, 3.5, 3.6, 3.7, 4.7, 4.8, 4.10, 4.16, 4.30, 4.32, 4.41, 4.47, 4.50, 4.53, 4.55, 4.56, 4.58, 4.59, 4.68, \\ 
& 4.70, 4.72, 4.74, 4.75, 4.76, 4.77, 4.85, 4.86, 4.90, 4.91, 4.96, 4.98 4.100, 4.102, 4.106, 4.107, 4.108 \\
4u^{25} & 4.71, 4.99, 4.105 \\
\end{array}\]
}\end{example}

\begin{example}\textup{For our final example, we demonstrate that
the twisted virtual shadow module invariant $\Phi_{XS}^G$ is not 
determined by the twisted Jones polynomial defined in \cite{B}. The 
twisted virtual links below both have twisted Jones polynomial
$(-A^{-2}-A^{-4})(-A^{-2}-A^2)$ but are distinguished by 
$\Phi_{XS}^G$ with the twisted virtual birack $X$, trivial $X$-shadow 
structure on $S=\{A\}$ and
$\mathbb{Z}[X,S]$-module structure on $G=\mathbb{Z}_3$ below:
\[M_X=\left[\begin{array}{ccc|ccc|ccc|ccc|c}
2 & 2 & 2 & 2 & 2 & 2 & 1 & 1 & 2 & 1 & 1 & 2 & 2 \\
1 & 1 & 1 & 1 & 1 & 1 & 2 & 2 & 1 & 2 & 2 & 1 & 1 \\
3 & 3 & 3 & 3 & 3 & 3 & 3 & 3 & 3 & 3 & 3 & 3 & 3 \\
\end{array}\right],\quad
M_S=\left[\begin{array}{ccc}
A & A & A \\
\end{array}\right]\]
\[M_G=\left[\begin{array}{ccc|ccc|ccc|c}
1 & 1 & 1 & 2 & 1 & 1 & 2 & 1 & 1 & 1 \\
1 & 1 & 1 & 1 & 2 & 1 & 1 & 2 & 1 & 1 \\
1 & 1 & 1 & 1 & 1 & 2 & 1 & 1 & 2 & 2 \\
\end{array}\right].\]
\[\begin{array}{cc}
\includegraphics{bklns-12.png} & \includegraphics{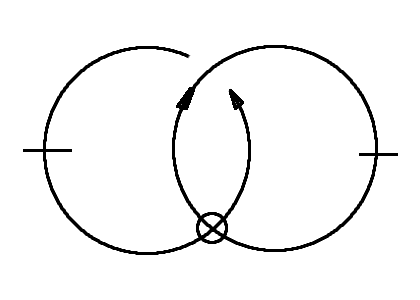} \\
\Phi_{XS}^G(L)=u+4u^9 & \Phi_{XS}^G(L')=2u+3u^9 
\end{array}\]
}\end{example}

\section{\large\textbf{Questions}}\label{q}

For simplicity, we have limited ourselves to computing $\Phi_{X,S}^G$ in 
the case when $G$ is a commutative ring -- specifically, the case where
$G=\mathbb{Z}_n$. We expect that $\Phi_{X,S}^G$ should be even stronger
if we expand to the case of noncommutative rings, e.g. $n\times n$ matrices
over $\mathbb{Z}_n$.

In the biquandle case, we can define $X$-labelings of semisheets in 
abstract knotted surface diagrams. What new or different relations, if any,
are imposed by the Roseman moves on the set of bead labelings? That is, define
virtual surface biquandle algebras and modules.

How do the enhancement strategies of the quandle counting invariant from
\cite{CEGS} apply in the case of virtual and twisted virtual shadow algebras?

What is the relationship of $\Phi_{X,S}^G$ to birack cocycle invariants?

\noindent\textsc{Department of Mathematics \\
Claremont McKenna College \\
850 Columbia Ave. \\
Claremont, CA 91711}

\end{document}